\DeclareMathOperator{\Ker}{Ker}
\DeclareMathOperator{\Img}{Im}
\def\FL{\mbox{\it FL\/}}
\def\GPTW{\mbox{\it GPTW\/}}
\def\RC{\mbox{\it RC\/}}
\def\RA{\mbox{\it RA\/}}
\def\id{{\mathrm{id}}}
\def\CC{\mathbb{C}}
\def\ZZ{\mathbb{Z}}
\def\Zg{\ZZ_{\geq 0}}
\def\Zm{\Zg^m}
\def\og{\overline{g}}
\def\K{\mathcal{K}}
\def\L{\mathcal{L}}
\def\ZK{\mathcal{Z}_\K}
\def\DJ{\mathrm{DJ}}
\def\k{\mathbf{k}}
\def\H{{\widetilde{H}}}
\def\pp{\begin{picture}(5,5)
\put(0,2){\circle*{3}}
\put(5,2){\circle*{3}}
\end{picture}}
\def\ppp{
\begin{picture}(10,5)
\put(0,2){\circle*{3}}
\put(5,2){\circle*{3}}
\put(10,2){\circle*{3}}
\end{picture}
}
\def\pepp{
\begin{picture}(10,5)
\put(0,2){\circle*{3}}
\put(0,2){\line(1,0){5}}
\put(5,2){\circle*{3}}
\put(10,2){\circle*{3}}
\end{picture}
}
\newtheorem{theorem}{Theorem}[section]
\newtheorem{lemma}[theorem]{Lemma}
\newtheorem{conjecture}[theorem]{Conjecture}
\newtheorem{problem}[theorem]{Problem}
\newtheorem{proposition}[theorem]{Proposition}
\newtheorem{corollary}[theorem]{Corollary}
\theoremstyle{definition}
\newtheorem{definition}[theorem]{Definition}
\newtheorem{remark}[theorem]{Remark}
\newtheorem{example}[theorem]{Example}
\numberwithin{equation}{section}
\title[Commutator subalgebra of the Lie algebra associated with a RACG]{The commutator subalgebra of the Lie algebra associated with a right-angled Coxeter group}
\subjclass[2020]{20F14,20F12,20F55; 57S12}
\keywords{right-angled Coxeter group, lower central series, associated Lie algebra, moment-angle complex, polyhedral product}
\author{Fedor Vylegzhanin}
\address{
\parbox{\linewidth}
{National Research University Higher School of Economics, Moscow, Russia;\\
Steklov Mathematical Institute of Russian Academy of Sciences, Moscow, Russia}}
\email{vylegf@gmail.com}
\author{Yakov Veryovkin}
\address{
\parbox{\linewidth}
{National Research University Higher School of Economics, Moscow, Russia
}}
\email{verevkin{\_}j.a@mail.ru}
\thanks{This work was supported by the Russian Science Foundation under grant no.~24-71-00059, \href{https://rscf.ru/en/project/24-71-00059/}{https://rscf.ru/en/project/24-71-00059/}.}
\begin{document}
\begin{abstract}
We study the graded Lie algebra $L(RC_{\mathcal K})$ associated with the lower central series of a right-angled Coxeter group. We construct a surjective homomorphism from the polynomial ring over an explicit Lie algebra $N_{\mathcal K}$ to the commutator subalgebra of $L(RC_{\mathcal K})$, and conjecture that it is an isomorphism. The homomorphism is defined in terms of a new operation in Lie algebras associated with groups generated by involutions, which corresponds to the squaring and has an analogue in homotopy theory.

We show that the universal enveloping algebra $U(N_{\mathcal K})$ is isomorphic to the mod $2$ loop homology algebra of the corresponding moment-angle complex $\mathcal{Z}_{\mathcal K}$. This allows us to give a presentation of the Lie algebra $N_{\mathcal K}$ by generators and relations.
\end{abstract}
\maketitle
\section{Introduction}
Let $\Gamma$ be a simple graph on the vertex set $\{1,\dots,m\}$.
The corresponding \emph{right-angled Coxeter group} $\RC_\Gamma$ is the group generated by the elements $g_1,\ldots,g_m$ which satisfy the relations $g_i^2=1$ for all $i=1,\dots,m$ and $g_ig_j=g_jg_i$ for all edges $\{i,j\}\in\Gamma$. Right-angled Coxeter groups are classical objects in geometric group theory \cite{davis-book}. It is convenient to consider $\Gamma$ as the 1-skeleton of its clique complex $\K=\K(\Gamma)$ --- the largest simplicial complex with 1-skeleton $\K^1=\Gamma$. Throughout the paper, we denote $\RC_\K:=\RC_\Gamma$.

Each group $G$ has a descending filtration $G=:\gamma_1(G)\supset\dots\supset \gamma_n(G)\supset\dots$ by the normal subgroups $\gamma_{k+1}(G):=(G,\gamma_k(G))$, called the \emph{lower central series}, and the associated graded Lie algebra $L(G)=\bigoplus_{k\geq 1}L_k(G)$, $L_k(G):=\gamma_k(G)/\gamma_{k+1}(G)$, $\deg L_k=k$. We study the Lie algebra $L(\RC_\K)$ associated with a right-angled Coxeter group, and its commutator subalgebra $L'(\RC_\K)=\bigoplus_{k\geq 2}L_k(\RC_\K)$.

For the right-angled \emph{Artin} groups $\RA_\K$, defined similarly to the right-angled Coxeter groups but without introducing the relations $g_i^2 = 1$, the associated Lie algebra was computed in \cite{Duch-Krob,Papa-Suci,WaDe}. In more detail, the Lie algebra $L(\RA_\K)$ is isomorphic to the graph Lie algebra (over $\ZZ$) which corresponds to the graph $\K^1$. Similarly, by \cite[Proposition 4.2]{veryovkin} there is an epimorphism $\varphi: L_\K\to L(\RC_\K)$ of Lie algebras, where $L_\K$ is the graph Lie algebra over $\ZZ_2$, see \eqref{eqn:definition-of-LK}. This epimorphism is not an isomorphism (see~\cite[Example 4.3]{veryovkin}). So the associated Lie algebra is much more complicated for right-angled Coxeter groups than for right-angled Artin groups. In~\cite[Theorem 4.5]{veryovkin}, the groups $L_k(\RC_\K)$ were described for $k\leq 3$. For $k\geq 4$ difficulties arose, similar to those encountered in~Struik \cite{Struik1,Struik2} in the calculation of the quotient groups $\gamma_1(\RC_\K) / \gamma_k(\RC_\K)$ in some particular cases.

For any group $G$ generated by involutions, we introduce a new linear operation $h:L_k(G)\to L_{k+1}(G)$ which satisfies the identity $h([x,y])=[h(x),y]$, corresponds to the squaring in $G$ and has an analogue in homotopy theory (see Theorem \ref{t:h_properties} and Remark \ref{r:h_homotopy}).
Using this operation, in Theorem \ref{t:main_surjective_homomorphism} we define a surjective homomorphism of Lie algebras $$\psi:N_\K[t]=\bigoplus_{i\geq 0}N_\K t^i\to L'(\RC_\K)=\bigoplus_{k\geq 2}L_k(\RC_\K),$$ where $N_\K:=\langle\GPTW\rangle_{Lie}\subset L_\K$ is the Lie subalgebra generated by the explicit set of \emph{Grbi\'c--Panov--Theriault--Wu generators} \cite{gptw}. We prove that $\psi$ is injective in graded components of degree $k=2,3$ (see Proposition \ref{p:partial_confirmation}), and conjecture that $\psi$ is an isomorphism. The results of the unpublished thesis of R. Prener \cite{prener} confirm the conjecture in the case when $\K$ is a disjoint union of simplices.

We also obtain a remarkable connection of the Lie algebra $N_\K$ with the homotopy theory of \emph{moment-angle complexes} $\ZK$, which are topological spaces important in toric topology and theory of polyhedral products \cite{bu-pa15,bbc}. In Theorem \ref{t:U(N_K)} we prove an isomorphism of graded associative $\ZZ_2$-algebras
$$U(N_\K)\cong H_*(\Omega\ZK;\ZZ_2),$$
where $U(-)$ is the universal enveloping algebra of a Lie algebra and $H_*(\Omega X;\k)$ is the Pontryagin algebra of a simply connected space $X$. To prove this, in Section \ref{section:polyhedral-products} we use topological methods to obtain a partial multiplicative section of the natural homomorphism of Hopf algebras
$$T(x_1,\dots,x_m)/([x_i,x_j]=0,\{i,j\}\in\K)\to T(u_1,\dots,u_m)/(u_i^2=[u_i,u_j]=0,~\{i,j\}\in\K).$$
Using Theorem \ref{t:U(N_K)}, in Section \ref{sec:N_K} we describe the additive and multiplicative structure of $N_\K$.

\subsection{Acknowledgements}
The authors thank their teacher Taras Panov for suggesting the problem and very useful discussions, and the anonymous referee for important comments which greatly improved the exposition.

\section{Preliminaries}
\subsection{Simplicial complexes and graphs}
Let $\K$ be an abstract simplicial complex on the vertex set $[m] = \{1,2, \dots, m \}$. 
A subset $I\subset[m]$ such that $I\in\K$ is called a \emph{simplex} of $\K$. We always assume that $\K$ contains the empty set $\varnothing$ and the singletons $\{i\}$, $i = 1, \ldots, m$. For a subset~$J\subset[m]$, the simplicial complex
\[
  \K_J:=\{I\subset J\colon I\in\K\}
\]
is called the \emph{full subcomplex} of $\K$ on the vertex set $J$.

The \emph{Euler characteristic} of a complex $\K$ is the Euler characteristic of its geometric realisation. Hence $\chi(\K)=\sum_{I\in\K,I\neq\varnothing}(-1)^{|I|-1}$, so $1-\chi(\K)=\sum_{I\in\K}(-1)^{|I|}$.

A simplicial complex $\K$ is \emph{flag} if any set of pairwise connected vertices of $\K$ is a simplex. Any simple graph is the $1$-dimensional skeleton of a unique flag complex; thus a flag complex $\K$ is  determined by its $1$-skeleton $\K^1:=\{I\in\K:|I|\leq 2\}$.

A simple graph is \emph{chordal} if for any simple cycle of length $\geq 4$ there is an edge which divides it into two smaller cycles.

\begin{remark}
Many of objects considered in this paper are defined using a flag simplicial complex $\K$, but in fact are determined by its 1-skeleton, i.e. by a simple graph. We use the language of simplicial complexes to highlight the connection with the theory of polyhedral products, see \cite{pv,veryovkin,gptw}.
\end{remark}
\subsection{The Lie algebra associated with a group}
For a group $G$, the \emph{commutator} of $a, b \in G$ is the element $(a,b) = a^{-1}b^{-1}ab\in G$. For any three elements $a, b, c \in G$, the \emph{Witt--Hall identities} hold:
\begin{equation}
\begin{aligned}
&(a, bc) = (a, c) (a, b) ((a, b), c),\\
&(ab, c) = (a, c) ((a, c), b) (b, c),\\
&((a,b),c)((b,c),a)((c,a),b)=(b,a)(c,a)(c,b)^a(a,b)(a,c)^b(b,c)^a(a,c)(c,a)^b,
\end{aligned}
\end{equation}
where $a^b = b^{-1}ab$.

For any subgroups $H, W \subset G$, denote by $(H, W) \subset G$ the subgroup generated by the commutators $(h, w)$ for all $h \in H, w \in W$. In particular, the \emph{commutator subgroup} $G'$ of $G$ is $(G, G)$. Define $\gamma_1(G) := G$ and then recursively $\gamma_{k+1}(G) := (\gamma_{k}(G), G)$. The sequence of groups $\gamma_1(G), \gamma_2(G), \ldots, \gamma_k(G), \ldots$ is called the \emph{lower central series} of $G$. Then $\gamma_{k+1}(G)$ is a normal subgroup of $\gamma_k(G)$, and the quotient group $\gamma_{k}(G) / \gamma_{k+1}(G)$ is abelian. Consider their direct sum
$$
L(G) := \bigoplus_{k=1}^{+\infty} L_k (G),\quad L_k(G):=\gamma_k(G)/\gamma_{k+1}(G).
$$
We denote by $\overline{a_k}$ the class of an element $a_k \in \gamma_k(G)$ in the quotient group~$L_k (G)$. If $a_k \in \gamma_k(G), \; a_l \in \gamma_l(G)$, then $(a_k, a_l) \in \gamma_{k+l}(G)$. It follows from the Witt--Hall identities that $L(G)$ is a graded Lie algebra over $\ZZ$ (a Lie ring) with the Lie bracket defined by the formula $[\overline{a_k}, \overline{a_l}] := \overline{(a_k, a_l)}$. The Lie algebra $L(G)$ is called the \emph{Lie~algebra associated with} the group $G$ (see~\cite{Lazard, Ma-Car-Sol}).

Since the subgroup $\gamma_n(G)$ is generated by commutators of elements from $\gamma_1(G)=G$, the graded Lie algebra $L(G)$ is generated by $L_1(G)$. It follows that $\bigoplus_{k\geq 2}L_k(G)$ is the commutator subalgebra $L'(G):=[L(G),L(G)]$ of $L(G)$.

\subsection{Right-angled Coxeter groups}
Denote by $F(g_1, \ldots, g_m)$ the free group on $g_1, \ldots, g_m$. The \emph{right-angled Coxeter group} corresponding to a simplicial complex $\K$ on $m$ vertices is the group $\RC_\K$ defined by generators and relations as follows:
\[
  \RC_\K := F(g_1,\ldots,g_m)\big/ (g_i^2 = 1 \text{ for } i \in \{1, \ldots, m\}, \; \; g_ig_j=g_jg_i\text{ if
  }\{i,j\}\in\K).
\]
Clearly, the group $\RC_\K$ depends only on the graph $\K^1$, the $1$-skeleton of $\K$. Hence we can assume that $\K$ is a flag complex.

\medskip

We denote by $\ZZ_2$ the field with two elements. Denote by $\FL_{\ZZ_2}(\mu_1, \mu_2, \ldots, \mu_m)$ the free Lie algebra over $\ZZ_2$ on the generators $\mu_i$, where $\deg \mu_i = 1$. For a simplicial complex $\K$ on $m$ vertices define the \emph{graph Lie algebra} over $\ZZ_2$,
\begin{equation}
\label{eqn:definition-of-LK}    
L_\K := \FL_{\ZZ_2}(\mu_1, \mu_2, \ldots, \mu_m)/ ([\mu_i, \mu_j] = 0 \text{ for } \{i, j\} \in \K).\quad
\end{equation}
$L_\K$ also depends only on the graph $\K^1$.

\begin{proposition}[{\cite[Propositions 3.3 and 4.1]{veryovkin}}]
\label{p:liz2}
The square of any element of $\gamma_k(\RC_\K)$ is contained in $\gamma_{k+1}(\RC_\K)$. Hence
$L(\RC_\K)$ is a Lie algebra over $\ZZ_2$.\qed
\end{proposition}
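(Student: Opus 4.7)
The plan is to prove the squaring claim by induction on $k$, from which the $\ZZ_2$-structure on $L(\RC_\K)$ follows at once. The key computational input is the identity
\[
(ab)^2 = a^2 b^2 (b,a)^b,
\]
obtained by rewriting $abab = a \cdot ba \cdot b = a \cdot ab(b,a) \cdot b$ and then reshuffling. When $a, b \in \gamma_k(\RC_\K)$, the extra factor $(b,a)^b$ lies in $\gamma_{2k} \subset \gamma_{k+1}$, so the set-map $\sigma_k : \gamma_k \to \gamma_k/\gamma_{k+1}$, $a \mapsto \overline{a^2}$, is a group homomorphism into an abelian group. It therefore suffices to check that $\sigma_k$ vanishes on a generating set of $\gamma_k$.

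For the base case $k = 1$, the generating set $\{g_1, \ldots, g_m\}$ is killed directly by the defining relations $g_i^2 = 1$. For the inductive step $k \geq 2$, I would use that $\gamma_k$ is generated by simple commutators $(y, x)$ with $y \in \gamma_{k-1}$ and $x \in G$, and apply the Witt--Hall identity $(ab, c) = (a, c)((a, c), b)(b, c)$ with $a = b = y$ and $c = x$ to get
\[
(y^2, x) = (y, x) \cdot ((y, x), y) \cdot (y, x).
\]
The middle factor lies in $\gamma_{2k - 1} \subset \gamma_{k+1}$ (this is where $k \geq 2$ is used), so modulo $\gamma_{k+1}$ the right-hand side is just $(y, x)^2$. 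On the other hand, the inductive hypothesis gives $y^2 \in \gamma_k$, hence $(y^2, x) \in (\gamma_k, G) = \gamma_{k+1}$. Comparing the two expressions yields $(y, x)^2 \in \gamma_{k+1}$, completing the induction.

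The Lie algebra assertion is then immediate: in $L_k(\RC_\K) = \gamma_k/\gamma_{k+1}$, written additively, one has $2 \bar a = \overline{a \cdot a} = \overline{a^2} = 0$, so each graded component is a $\ZZ_2$-module, and the already $\ZZ$-bilinear Lie bracket automatically descends to a $\ZZ_2$-bilinear operation. I do not anticipate a real obstacle here: the proof rests on a single algebraic identity together with a routine Witt--Hall computation, with the only bookkeeping subtlety being the weight bound $\gamma_{2k-1} \subset \gamma_{k+1}$, which forces the induction to start at $k = 1$ and treat it separately.
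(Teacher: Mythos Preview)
The paper does not give its own proof of this proposition; it is stated with a citation to \cite[Propositions 3.3 and 4.1]{veryovkin} and closed immediately with a \qed. Your argument is correct and self-contained: the identity $(ab)^2=a^2b^2(b,a)^b$ shows that $a\mapsto\overline{a^2}$ is a homomorphism $\gamma_k\to\gamma_k/\gamma_{k+1}$, so it suffices to check vanishing on generators; the base case $k=1$ uses $g_i^2=1$, and for $k\ge 2$ the Witt--Hall identity $(y^2,x)=(y,x)((y,x),y)(y,x)$ together with $((y,x),y)\in\gamma_{2k-1}\subset\gamma_{k+1}$ and the inductive hypothesis $y^2\in\gamma_k$ gives $(y,x)^2\in\gamma_{k+1}$, exactly as you wrote.
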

\begin{proposition}[{\cite[Proposition 4.2]{veryovkin}}]
\label{p:epinmono}
There is an epimorphism of Lie algebras $\varphi : L_\K \rightarrow L(\RC_\K)$, $\varphi(\mu_i)=\og_i$.\qed
\end{proposition}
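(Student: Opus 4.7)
The plan is to build $\varphi$ by the universal property of the free Lie algebra and then check that the defining relations of $L_\K$ are killed. By Proposition \ref{p:liz2}, $L(\RC_\K)$ is a Lie algebra over $\ZZ_2$, so it makes sense to look for $\ZZ_2$-Lie algebra maps out of $FL_{\ZZ_2}\langle \mu_1,\ldots,\mu_m\rangle$. I would first define a homomorphism $\tilde\varphi: FL_{\ZZ_2}\langle\mu_1,\ldots,\mu_m\rangle\to L(\RC_\K)$ by $\mu_i\mapsto\og_i\in L_1(\RC_\K)$, which is uniquely determined by the universal property of the free Lie algebra.

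Next I would verify that $\tilde\varphi$ descends to the quotient $L_\K$, i.e.\ that $\tilde\varphi([\mu_i,\mu_j])=0$ whenever $\{i,j\}\in\K$. This is immediate from the construction of $\RC_\K$: the relation $\{i,j\}\in\K$ forces $g_ig_j=g_jg_i$ in $\RC_\K$, so $(g_i,g_j)=1$, and therefore
\[
[\og_i,\og_j]=\overline{(g_i,g_j)}=\overline{1}=0 \text{ in } L_2(\RC_\K).
\]
Hence $\tilde\varphi$ factors through $L_\K$, producing the desired homomorphism $\varphi:L_\K\to L(\RC_\K)$ with $\varphi(\mu_i)=\og_i$.

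For surjectivity, recall from the general discussion of the associated Lie algebra that $L(\RC_\K)$ is generated as a Lie algebra by $L_1(\RC_\K)=\RC_\K/\gamma_2(\RC_\K)$, which is the abelianization of $\RC_\K$. Since $\RC_\K$ is generated as a group by $g_1,\ldots,g_m$, its abelianization is generated by the classes $\og_1,\ldots,\og_m$. All of these lie in $\Img\varphi$, and $\Img\varphi$ is a Lie subalgebra, so $\Img\varphi\supset L(\RC_\K)$, giving the epimorphism.

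No step is really an obstacle here: the main content has already been absorbed into Proposition \ref{p:liz2} (which ensures we can work over $\ZZ_2$ and that the Lie bracket on $L(\RC_\K)$ is well-defined via Witt--Hall). The only thing to watch is to use the correct sign/commutator conventions so that the commutator of abelian generators in $\RC_\K$ truly gives the zero bracket in $L(\RC_\K)$, which is immediate from $[\overline{a},\overline{b}]:=\overline{(a,b)}$.
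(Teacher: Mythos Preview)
Your argument is correct and is exactly the standard construction one would expect: define $\varphi$ by the universal property of the free Lie algebra over $\ZZ_2$, check the relations $[\mu_i,\mu_j]$ for $\{i,j\}\in\K$ are sent to zero because $(g_i,g_j)=1$, and deduce surjectivity from the fact that $L(\RC_\K)$ is generated by $L_1(\RC_\K)$. The paper itself gives no proof here (the \qed after the statement indicates it is simply quoted from \cite[Proposition~4.2]{veryovkin}), so there is nothing to compare against beyond noting that your proof is the natural one and almost certainly coincides with the cited argument.
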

The epimorphism $\varphi$ is not an isomorphism (see \cite[Example 4.3]{veryovkin}). In this sense the right-angled Coxeter groups are different from the right-angled Artin groups, where the associated Lie algebra $L(\RA_\K)$ is isomorphic to the graph Lie algebra over $\ZZ$, see~\cite{Duch-Krob,Papa-Suci,WaDe}.

\subsection{Nested commutators}
Fix $m\geq 1$, and let $q=(q_1,\dots,q_m)$ be elements in a Lie algebra $L$ over a commutative ring with unit. (In our applications, $m$ is the number of vertices of a simplicial complex).
\begin{definition}
    An expression of the form
    $$a=[q_{i_1},[\dots[q_{i_k},q_j]\dots]],\quad k\geq 1,~i_1,\dots,i_k,j\in[m]$$
    is a  \emph{nested commutator of length $k+1$} on the elements $q_1,\dots,q_m\in L$. We say that $a$ is a nested commutator \emph{without repeats}, if $i_1,\dots,i_k,j\in[m]$ are pairwise distinct. 
\end{definition}
\begin{definition}
Let $V=(i_1,\dots,i_k)$ be a sequence of elements in $[m]=\{1,\dots,m\}$. For a subset $I\subset[k],$ $I=\{t_1<\dots<t_s\}$ and an element $x\in L$, denote
$$c_{q,V}(I,x):=[q_{i_{t_1}},[q_{i_{t_2}},\dots[q_{i_{t_s}},x]\dots]]\in L.$$
For example, $c_{q,V}(\varnothing,x)=x$ and $c_{q,V}(\{t\},x)=[q_{i_t},x]$. Denote also
$$c_q(I,x):=c_{q,(1,2,\dots,m)}(I,x)=[q_{t_1},[q_{t_2},\dots[q_{t_s},x]\dots]]$$
for $I=\{t_1<\dots<t_s\}\subset[m]$. For $V=(5,4,5,1)$ and $I=\{1,3\}$, we have $c_{q,V}(I,x)=[q_5,[q_5,x]]$ and $c_q(I,x)=[q_1,[q_3,x]]$.
\end{definition}

The next lemma is proved by induction using the Jacobi identity.
\begin{lemma}[{cf. \cite[Lemma C.3]{vylegzhanin}}]
\label{l:c(I,[x,y])}
In the notations above, for $x,y\in L$ we have
\[
c_{q,V}(I,[x,y])=\sum_{I=A\sqcup B}\Big[c_{q,V}(A,x),c_{q,V}(B,y)\Big];
\]
in particular,
\[
c_q(I,[x,y])=\sum_{I=A\sqcup B}\Big[c_q(A,x),c_q(B,y)\Big].\qed
\]
\end{lemma}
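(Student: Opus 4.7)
The plan is to argue by induction on $s=|I|$, using the Jacobi identity to peel off one factor at a time.

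The base case $s=0$ is immediate: $c_{q,V}(\varnothing,[x,y])=[x,y]$ and the only decomposition is $\varnothing=\varnothing\sqcup\varnothing$, giving $[c_{q,V}(\varnothing,x),c_{q,V}(\varnothing,y)]=[x,y]$.

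For the inductive step, write $I=\{t_1<t_2<\dots<t_s\}$ and set $I':=\{t_2<\dots<t_s\}$, so that by definition $c_{q,V}(I,z)=[q_{i_{t_1}},c_{q,V}(I',z)]$ for any $z$. Applying this with $z=[x,y]$ and invoking the induction hypothesis on $I'$ gives
\[
c_{q,V}(I,[x,y])=\Bigl[q_{i_{t_1}},\sum_{I'=A'\sqcup B'}\bigl[c_{q,V}(A',x),c_{q,V}(B',y)\bigr]\Bigr].
\]
I would then expand each summand by the Jacobi identity in the form $[q_{i_{t_1}},[u,v]]=[[q_{i_{t_1}},u],v]+[u,[q_{i_{t_1}},v]]$. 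Because $t_1<\min I'$, prepending $t_1$ to $A'$ or to $B'$ yields $A'\cup\{t_1\}$ or $B'\cup\{t_1\}$ with $t_1$ as the new minimum, so
\[
[q_{i_{t_1}},c_{q,V}(A',x)]=c_{q,V}(A'\cup\{t_1\},x),\qquad [q_{i_{t_1}},c_{q,V}(B',y)]=c_{q,V}(B'\cup\{t_1\},y).
\]
Hence each term $[c_{q,V}(A',x),c_{q,V}(B',y)]$ in the sum contributes
\[
[c_{q,V}(A'\cup\{t_1\},x),c_{q,V}(B',y)]+[c_{q,V}(A',x),c_{q,V}(B'\cup\{t_1\},y)].
\]
Summing over decompositions $I'=A'\sqcup B'$ and observing that every decomposition $I=A\sqcup B$ arises in exactly one way by specifying to which side $t_1$ is added, the right-hand side reassembles as $\sum_{I=A\sqcup B}[c_{q,V}(A,x),c_{q,V}(B,y)]$, completing the induction. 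The second formula is the specialization $V=(1,2,\dots,m)$, $I\subset[m]$.

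There is no real obstacle here: the only thing to keep track of is the bookkeeping of the minima of the subsets, which is handled automatically because $t_1$ is strictly smaller than every index in $I'$, so the notation $c_{q,V}(\cdot\,,\cdot)$ behaves functorially under adjoining $t_1$.
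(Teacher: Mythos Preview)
Your proof is correct and follows exactly the approach indicated in the paper, which states only that the lemma ``is proved by induction using the Jacobi identity'' without spelling out the details. Your write-up supplies precisely those details: induction on $|I|$, peeling off the smallest index via the Jacobi identity, and matching up the resulting decompositions.
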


\begin{definition}[{\cite[Definition 5.2]{vylegzhanin}}]
    Let $\K$ be a flag simplicial complex on the vertex set $[m]=\{1,\dots,m\}$. For a subset $J\subset[m]$, denote by $\Theta_\K(J)\subset J$ the set of all vertices $j\in J$ which satisfy two conditions:
    \begin{itemize}
        \item $j$ is the smallest vertex in its connected component of the complex $\K_J$;
        \item the vertices $j$ and $\max(J)$ belong to different connected components of $\K_J$.
    \end{itemize}
    Given the elements $q_1,\dots,q_m$ of a Lie algebra $L$, a commutator $c_q(J\setminus j,q_j)\in L$ is called the \emph{GPTW generator} if $J\subset[m]$ and $j\in \Theta_\K(J).$
\end{definition}
I.e., GPTW generators are nested commutators without repeats of the form
$$[q_{i_1},[\dots[q_{i_k},q_j]\dots]],~i_1<\dots<i_k>j,\quad j\in\Theta_\K(\{i_1,\dots,i_k,j\}).$$
For example, $q_i\notin\GPTW$, and $[q_i,q_j]\in\GPTW$ if and only if $i>j$ and $\{i,j\}\notin\K$. The set of GPTW generators $$\GPTW:=\{c_q(J\setminus j,q_j):J\subset[m],j\in\Theta_\K(J)\}\subset L$$ consists of $\sum_{J\subset[m]}\dim\H_0(\K_J)$ elements, where $\K_J$ has $1+\dim\H_0(\K_J)$ connected components. 
These elements first appeared in the work of Grbi\'c, Panov, Theriault and Wu \cite[Theorem 4.3]{gptw} as a minimal generating set for loop homology of moment-angle complexes corresponding to flag complexes. Similar commutators in $\RC_\K$ form a minimal generating set for the group $\RC'_\K$ \cite[Theorem 4.5]{pv}.

\section{Properties of nested commutators}
Fix a flag simplical complex $\K$ on the vertex set $[m]=\{1,\dots,m\}$ and elements $q_1,\dots,q_m$ of a Lie algebra $L$.

\subsection{General properties}
The proof of the next lemma is similar to the proof of \cite[Lemma 4.7]{pv}.
\begin{lemma}
\label{l:ordering cwrs}
    Each nested commutator without repeats $a=[q_{i_1},[\dots[q_{i_k},q_j]\dots]]$ is equal to a Lie polynomial on nested commutators of the form $c_q(I,q_i)$, where $I\neq\varnothing$ and $\max(I)>i$.
\end{lemma}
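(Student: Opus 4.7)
Plan: I would proceed by strong induction on the length $k+1$. The base $k+1=2$ is immediate: by anti-commutativity, $[q_{i_1},q_j]$ equals $\pm c_q(\{\max(i_1,j)\},q_{\min(i_1,j)})$, which lies in the target set since $\max(I)>i$.

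For the inductive step, I would canonicalize $a$ in two phases using Jacobi $[q_a,[q_b,y]]=[q_b,[q_a,y]]+[[q_a,q_b],y]$ together with anti-commutativity. Let $M:=\max\{i_1,\ldots,i_k,j\}$. \emph{Phase 1} (move $M$ to the last outer position): if $j=M$, apply anti-commutativity at the innermost bracket to exchange $q_{i_k}$ and $q_j$; otherwise, if $M=i_{\ell_0}$ with $\ell_0<k$, apply Jacobi repeatedly to swap $q_M$ one step to the right until it sits in outer position $k$. \emph{Phase 2} (sort the remaining outer indices): once $M$ sits at position $k$, apply Jacobi at each adjacent inversion among the first $k-1$ outer positions until they are strictly increasing. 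The terminal shape is $c_q(\{i_1',\ldots,i_{k-1}',M\},q_w)$ with $w<M$, which lies in the target set.

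Each Jacobi swap, wrapped by an outer prefix $[q_{i_1},[\ldots,[q_{i_{\ell-1}},\cdot]\ldots]]$, produces a main term that advances the canonicalization and a remainder of the form $a''=[q_{i_1},[\ldots,[q_{i_{\ell-1}},[z,y]]\ldots]]$ with $z=[q_a,q_b]$. I would expand $a''$ using Lemma~\ref{l:c(I,[x,y])} with $V'=(i_1,\ldots,i_{\ell-1})$ and $I'=\{1,\ldots,\ell-1\}$:
\[
a''=\sum_{I'=A\sqcup B}\bigl[c_{q,V'}(A,z),\,c_{q,V'}(B,y)\bigr].
\]
Each factor is a right-normed nested commutator without repeats of length strictly less than $k+1$, so by the inductive hypothesis each factor (and therefore $a''$) lies in the Lie subalgebra generated by the target set.

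The main obstacle is the edge case of Phase~1 when $\ell_0=k-1$: there $y=q_j$ has length $1$, and the Lemma~\ref{l:c(I,[x,y])} expansion can produce the degenerate factor $q_j$, which is not a Lie polynomial in the target commutators. I would bypass this by handling that particular remainder directly via anti-commutativity: $[[q_M,q_{i_k}],q_j]=-[q_j,[q_M,q_{i_k}]]$ turns $a''$ into a single right-normed nested commutator of length $k+1$ in which $M$ already sits at position $k$, so it can be fed into Phase~2 instead of being decomposed further. No analogous edge case arises in Phase~2, since the inner block $y$ there always contains at least $q_M$ and the current inner index, hence has length $\geq 2$.
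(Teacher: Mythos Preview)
Your proof is correct and follows essentially the same strategy as the paper's: adjacent Jacobi swaps with the remainders expanded via Lemma~\ref{l:c(I,[x,y])} and handled by induction on length, together with the anti-commutativity trick $[[q_M,q_{i_k}],q_j]=-[q_j,[q_M,q_{i_k}]]$ to resolve the degenerate innermost case. The only difference is organizational: the paper first permutes $i_1,\dots,i_{k-1}$ (these swaps never touch the innermost bracket, so no edge case arises), ensures $i_k>j$, and then applies the edge-case identity once if $i_{k-1}=M$, whereas you push $M$ to position $k$ first and sort afterward.
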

\begin{proof}
    Induction on $k$. The base case: for $k=1$ we have $a=[q_{i_1},q_j]=c_q(\{i_1\},q_j)$ for $i_1>j$ and $a=\pm c_q(\{j\},q_{i_1})$ for $j>i_1$.

    Now let $k\geq 2$. For $t=1,\dots,k-2$ by the Jacobi identity we have
    \begin{align*}
    a=[q_{i_1},[\dots[q_{i_t},[q_{i_{t+1}},x]]\dots]]=&\pm[q_{i_1},[\dots[q_{i_{t+1}},[q_{i_t},x]]\dots]]\\
    &\pm[q_{i_1},[\dots[[q_{i_t},q_{i_{t+1}}],x]]\dots]].
    \end{align*}
    In the first summand the indices $i_t$ and $i_{t+1}$ are swapped. The second summand is of the form $c_{q,V}(I,[[q_{i_t},q_{i_{t+1}}],x])$ , so by Lemma \ref{l:c(I,[x,y])} it is a Lie polynomial on nested commutators without repeats which have smaller length (hence the inductive step applies). Thus we can freely permute $i_1,\dots,i_{k-1}$. Also, the identity $[q_{i_k},q_j]=\pm[q_j,q_{i_k}]$ allows to swap $i_k$ and $j$.

    After such a permutation we can assume that $i_1<\dots<i_{k-1}$ and $i_k>j$. If $i_{k-1}<i_k$, then $a$ is of the required form $c_q(\{i_1,\dots,i_k\},q_j)$. Otherwise $i_{k-1}>i_k$, so $i_{k-1}=\max\{i_1,\dots,i_k,j\}$. Use the identity
    $$[q_{i_{k-1}},[q_{i_k},q_j]]=\pm[q_{j},[q_{i_{k-1}},q_{i_k}]]\pm[q_{i_k},[q_{i_{k-1}},q_j]].$$
    We obtain a sum of two elements of the form $[q_{t_1},[\dots[q_{t_k},q_s]\dots],$
    where $t_k$ is the largest index. By permuting $t_1,\dots,t_{k-1}$ we can achieve that $t_1<\dots<t_k>s$. This expression has the required form $c_q(\{t_1,\dots,t_k\},q_{s})$.
\end{proof}
Recall that a Lie polynomial is a linear combination of iterated Lie brackets (of a given set of elements).
\begin{proposition}
\label{p:gptw generate cwr}
    Suppose that the elements $q_1,\dots,q_m\in L$ satisfy $[q_i,q_j]=0$ for any $\{i,j\}\in\K$. Then each nested commutator without repeats is a Lie polynomial on the GPTW generators.
\end{proposition}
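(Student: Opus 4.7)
The plan is to use Lemma \ref{l:ordering cwrs} to reduce the statement to showing that every commutator $c_q(J\setminus j,q_j)$ with $\max(J) > j$ is a Lie polynomial on GPTW generators, and then to prove this by strong induction on $|J|$.

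For the base case $|J| = 2$, the commutator $[q_{\max(J)}, q_j]$ is either zero (if $\{j,\max(J)\}\in\K$) or already a GPTW generator (otherwise $j\in\Theta_\K(J)$, since $\{j\}$ and $\{\max(J)\}$ are the two connected components of $\K_J$).

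For the inductive step, fix $J$ with $|J|\geq 3$ and set $c := c_q(J\setminus j, q_j)$. If $j$ has no neighbor in $\K_J$, then $\{j\}$ is a connected component of $\K_J$ and $\max(J)$ lies in a different component, so $j\in\Theta_\K(J)$ and $c$ is a GPTW generator. Otherwise, pick $j^*\in J\setminus\{j\}$ with $\{j,j^*\}\in\K$, so that $[q_{j^*}, q_j] = 0$. The key computation is to apply the Jacobi identity $[q_{j^*},[q_a,y]] = [q_a,[q_{j^*},y]] + [[q_{j^*},q_a],y]$ repeatedly to move $q_{j^*}$ inward through the nested bracket until it sits next to $q_j$. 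The surviving main term then contains $[q_{j^*}, q_j] = 0$ and vanishes, so $c$ equals the sum of the correction terms accumulated along the way. Each correction has the form $c_{q,V}(\tilde I, [[q_{j^*}, q_a], w])$ for some $a \in J\setminus\{j,j^*\}$ and a sub-commutator $w$, and by Lemma \ref{l:c(I,[x,y])} it expands as
\[
\sum_{\tilde I = A\sqcup B}\bigl[c_{q,V}(A, [q_{j^*}, q_a]),\; c_{q,V}(B, w)\bigr].
\]
Crucially, the first factor omits the index $j$ and the second factor omits $j^*$, so each factor is a nested commutator without repeats using strictly fewer than $|J|$ indices.

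Combining Lemma \ref{l:ordering cwrs} applied to each factor with the strong inductive hypothesis on smaller index sets, each factor lies in the Lie subalgebra generated by the GPTW generators, and hence so does $c$. The main obstacle is the index bookkeeping in the correction terms; the observation that $j$ is absent from the first factor and $j^*$ from the second is precisely what guarantees the strict decrease in index count that makes the induction go through.
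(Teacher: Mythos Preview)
The paper's own proof is a one-line citation to \cite{gptw} and \cite{vylegzhanin}, so you are attempting a self-contained argument where the paper does not. Your reduction via Lemma~\ref{l:ordering cwrs} and the strategy of pushing $q_{j^*}$ inward by repeated Jacobi are both sound, but the induction does not close as written.

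The gap appears at the last inward step, where $q_{j^*}$ is pushed past $q_{i_k}$ (here $i_k=\max J$, and the issue arises whenever $j^*\neq i_k$). That correction term is $c_{q,V}(\tilde I,[[q_{j^*},q_{i_k}],q_j])$, so in your notation $a=i_k$ and $w=q_j$. Expanding by Lemma~\ref{l:c(I,[x,y])} and taking $B=\varnothing$ makes the second factor equal to the bare generator $q_j$. This element does \emph{not} lie in the Lie subalgebra generated by the GPTW generators (all of which have degree~$\geq 2$), so the sentence ``each factor lies in the Lie subalgebra generated by the GPTW generators'' is false for this summand. Treating the whole bracket
\[
\bigl[c_{q,V}(\tilde I,[q_{j^*},q_{i_k}]),\,q_j\bigr]
\]
as a single piece does not rescue the argument either: up to sign it is a nested commutator without repeats on \emph{all} of $J$, so the strong inductive hypothesis on strictly smaller index sets does not apply to it.

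The defect is repairable, but not for free: one needs a secondary inductive measure (for instance tracking the innermost letter, or choosing $j^*$ along a path in $\K_J$ from $j$ towards $\Theta_\K(J)$ so that the residual length-$|J|$ term is strictly ``closer'' to a GPTW generator) to guarantee termination. Supplying that refinement is exactly the content of the algorithms the paper cites in \cite{gptw} and \cite[Algorithm~5.4]{vylegzhanin}.
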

\begin{proof}
By Lemma \ref{l:ordering cwrs}, it is sufficient to express the elements of the form $c_q(J\setminus j,q_j)$, $j\in J$, through the GPTW generators. In the case of Lie superalgebras such an expression exists by \cite[Theorem 4.3]{gptw} (see also a more explicit algorithm \cite[Algorithm 5.4]{vylegzhanin}). The Lie algebra case is similar up to a change of signs (instead of the identities $[x,y]=-(-1)^{\deg(x)\cdot\deg(y)} [y,x]$ we use $[x,y]=-[y,x]$, and similarly with the Jacobi identity.)
\end{proof}
\begin{remark}
    In the proofs of Lemma \ref{l:ordering cwrs} and Proposition \ref{p:gptw generate cwr} we used that the bracket is bilinear, commutative up to a sign and satisfies the Jacobi identity up to signs, without specifying the signs, and never used the identity $[x,x]=0$. So the same statements hold for Lie superalgebras and for elements of homotopy groups of topological spaces with respect to the Whitehead bracket \cite[\S X.7]{whitehead}.
\end{remark}

\subsection{The squaring operation}
\begin{lemma}
\label{lmm:surjective-squaring}
    Let $f:G_1\to G_2$ be a surjective map of groups, and suppose that the square of each element $x\in\gamma_k(G_1)$ is contained in $\gamma_{k+1}(G_1)$. Then the square of each element $y\in\gamma_k(G_2)$ is contained in $\gamma_{k+1}(G_2)$.
\end{lemma}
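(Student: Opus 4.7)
The plan is to reduce everything to the standard fact that a surjective group homomorphism maps the lower central series onto the lower central series, i.e. $f(\gamma_k(G_1))=\gamma_k(G_2)$ for all $k\geq 1$. Once we have this, the lemma becomes essentially a one-line computation: given $y\in\gamma_k(G_2)$, choose a lift $x\in\gamma_k(G_1)$ with $f(x)=y$; by hypothesis $x^2\in\gamma_{k+1}(G_1)$, hence $y^2=f(x)^2=f(x^2)\in f(\gamma_{k+1}(G_1))=\gamma_{k+1}(G_2)$.

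To establish the auxiliary identity $f(\gamma_k(G_1))=\gamma_k(G_2)$, I would proceed by induction on $k$. The base $k=1$ is just the surjectivity of $f$. For the inductive step, one uses the fact that $f$ respects commutators, so $f$ sends the generating set $\{(a,b)\colon a\in\gamma_k(G_1),b\in G_1\}$ of $\gamma_{k+1}(G_1)$ onto the generating set $\{(f(a),f(b))\colon a\in\gamma_k(G_1),b\in G_1\}$ of $(\gamma_k(G_2),G_2)=\gamma_{k+1}(G_2)$, where the last equality uses both the inductive hypothesis and the surjectivity of $f$. Since $f(\gamma_{k+1}(G_1))$ is a subgroup of $G_2$ containing these generators, it equals $\gamma_{k+1}(G_2)$.

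There is no real obstacle here; the only thing to be careful about is writing out the inductive step cleanly, since a priori $f$ only maps generators to generators and one has to invoke the subgroup property to conclude equality. Everything else is a direct computation using that a homomorphism commutes with squaring and with taking commutators.
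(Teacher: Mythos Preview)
Your proof is correct and follows essentially the same approach as the paper: both use that a surjective homomorphism maps $\gamma_k(G_1)$ onto $\gamma_k(G_2)$, lift $y$ to some $x\in\gamma_k(G_1)$, and then push $x^2\in\gamma_{k+1}(G_1)$ forward. You are just more explicit in justifying the auxiliary surjectivity $f(\gamma_k(G_1))=\gamma_k(G_2)$, which the paper simply invokes.
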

\begin{proof}    
    Since $\gamma_1(G)=G$ and $\gamma_{n+1}(G)=(G,\gamma_n(G))$, by induction we obtain that $\gamma_n(G_1)\to\gamma_n(G_2)$ is surjective for all $n$. Now let $y\in\gamma_k(G_2)$. Then $y=f(x)$ for some $x\in\gamma_k(G_1)$. By assumption, $x^2\in\gamma_{k+1}(G_1)$, so $y^2=f(x^2)\in\gamma_{k+1}(G_2)$.
\end{proof}

A version of the following operation is used implicitly in Prener's PhD thesis \cite{prener}. We discovered its properties inspired by an analogy with the composition product in homotopy groups (see Remark \ref{r:h_homotopy} below).
\begin{theorem}
    \label{t:h_properties}
    Let $G$ be a group generated by elements $g_1,\dots,g_m$ such that $g_i^2=1$ for all $i=1,\dots,m$.
    For $k\geq 2$ consider the map
    $$h:\gamma_k(G)/\gamma_{k+1}(G)\to\gamma_{k+1}(G)/\gamma_{k+2}(G),~a\gamma_{k+1}(G)\mapsto a^2\gamma_{k+2}(G).$$
    Then
    \begin{enumerate}
        \item $h:L_k(G)\to L_{k+1}(G)$ is a well defined linear map;
        \item $[x,h(y)]=h([x,y])$ for $x\in L_k(G)$, $y\in L_\ell(G)$, $\ell\geq 2$;
        \item $h([\og_i,y])=[\og_i,h(y)]$ for $i=1,\dots,m$, $y\in L_\ell(G)$, $\ell\geq 2$;
        \item $h([\og_i,\og_j])=[\og_i,[\og_i,\og_j]].$
    \end{enumerate}
\end{theorem}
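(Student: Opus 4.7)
The overall plan is to lift every statement to the group $G$ and deduce it from the Witt--Hall identities together with the weight estimate $(\gamma_p(G), \gamma_q(G)) \subset \gamma_{p+q}(G)$, reducing modulo the appropriate term of the lower central series. By the preceding lemma together with Proposition~\ref{p:liz2}, every element of $\gamma_k(G)$ squares into $\gamma_{k+1}(G)$, so $L(G)$ is a $\ZZ_2$-Lie algebra and the squaring map really lands in $\gamma_{k+1}(G)$ as claimed.

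For part (1), I would start from the formal identity
\[
(ac)^2 = a^2 \cdot (a^{-1} c a) \cdot c = a^2 \cdot c \cdot (c, a) \cdot c.
\]
Well-definedness: if $c \in \gamma_{k+1}(G)$ then $(c, a) \in \gamma_{2k+1}(G) \subset \gamma_{k+2}(G)$ and $c^2 \in \gamma_{k+2}(G)$, whence $(ac)^2 \equiv a^2 \pmod{\gamma_{k+2}(G)}$. Additivity: for $a, b \in \gamma_k(G)$ with $k \geq 2$, one has $(b, a) \in \gamma_{2k}(G) \subset \gamma_{k+2}(G)$, so $(ab)^2 \equiv a^2 b^2 \pmod{\gamma_{k+2}(G)}$. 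Linearity over $\ZZ_2$ is just additivity.

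For part (2), I would specialise the first Witt--Hall identity to $c = b$, obtaining
\[
(a, b^2) = (a, b)^2 \cdot ((a, b), b).
\]
Because $(a, b) \in \gamma_{k+\ell}(G)$, the second factor lies in $\gamma_{k+2\ell}(G)$, and the inclusion $\gamma_{k+2\ell}(G) \subset \gamma_{k+\ell+2}(G)$ is precisely the hypothesis $\ell \geq 2$. Reducing modulo $\gamma_{k+\ell+2}(G)$ gives $h([x, y]) = [x, h(y)]$. This calculation nowhere uses $k \geq 2$, so it applies in particular with $x = \og_i \in L_1(G)$, which is exactly part (3). For part (4), where $\ell = 1$ and the weight estimate is no longer enough, I would feed the defining relation $g_i^2 = 1$ into the same Witt--Hall identity: the tautology $1 = (g_j, g_i^2)$ yields the \emph{exact} equality $(g_j, g_i)^2 = ((g_j, g_i), g_i)^{-1}$ in $G$. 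Projecting to $L_3(G)$, where $-1 = 1$, and using anticommutativity in characteristic $2$ to turn $[[\og_j, \og_i], \og_i]$ into $[\og_i, [\og_i, \og_j]]$, gives the claim.

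The computation is essentially four applications of the Witt--Hall identities plus careful weight-counting, so I do not anticipate a serious obstacle. The only mild subtlety is keeping track of which ``error'' commutators may be absorbed into $\gamma_{k+\ell+2}(G)$; this both pins down the hypothesis $\ell \geq 2$ in parts (2)--(3) and explains why part (4), with $\ell = 1$, must be treated separately by invoking the defining relation $g_i^2 = 1$.
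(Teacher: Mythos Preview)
Your proof is correct and follows essentially the same approach as the paper: lift to $G$, apply the Witt--Hall identity $(a,b^2)=(a,b)^2((a,b),b)$ for parts (2)--(3), use the relation $g_i^2=1$ for part (4), and handle part (1) by direct commutator manipulation and weight-counting. The only cosmetic difference is in part (1), where the paper computes $b^{-2}a^2$ for two representatives $a,b$ of the same coset, while you compute $(ac)^2$ with $c\in\gamma_{k+1}(G)$; these are the same calculation reorganised, and your version is arguably cleaner since the same identity $(ac)^2=a^2\cdot c(c,a)c$ serves for both well-definedness and additivity.
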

\begin{proof}
    We use the shortened notation $\gamma_k:=\gamma_k(G)$. Recall that $\gamma_{k+1}\subset\gamma_k$, and for $a\in\gamma_k$, $b\in\gamma_\ell$ we have $(a,b)\in\gamma_{k+\ell}$. The Lie algebra operations in $L(G)=\bigoplus_k\gamma_k/\gamma_{k+1}$ are defined by the formulas     $\overline{a+b}=\overline{a}+\overline{b}$ and $[\overline{a},\overline{b}]=\overline{(a,b)}$, i.e. $a\gamma_{k+1}+b\gamma_{k+1}=ab\gamma_{k+1}$ and $[a\gamma_{k+1},b\gamma_{\ell+1}]=(a,b)\gamma_{k+\ell+1}.$

    By assumption, there is a surjective homomorphism $\RC_\K\to G$, where $\RC_\K=F(g_1,\dots,g_m)/(g_i^2=1,~i=1,\dots,m)$ is the right-angled Coxeter group corresponding to a simplicial complex with no edges. 
    By Proposition \ref{p:liz2} and the Lemma \ref{lmm:surjective-squaring}, the following property holds:
    \begin{equation}
    \label{eq:property}
    \text{If }c\in\gamma_k,\text{ then }c^2\in\gamma_{k+1}.
    \end{equation}
    
    1) We prove that $h$ is well defined. By \eqref{eq:property}, each $a\in\gamma_k$ determines an element $a^2\gamma_{k+2}$ of $\gamma_{k+1}/\gamma_{k+2}$. Now let $a,b\in\gamma_k$ and $a\gamma_{k+1}=b\gamma_{k+1}$, i.e. $b^{-1}a\in\gamma_{k+1}$; we need to check that $a^2\gamma_{k+2}=b^2\gamma_{k+2}$, i.e. that $b^{-2}a^2\in\gamma_{k+2}$. We have
    $$b^{-2}a^2=(b^{-1}a)^2a^{-1}ba^{-1}b^{-1}a^2=(b^{-1}a)^2 a^{-1}(b^{-1},a)a=(b^{-1}a)^2(a,(a,b^{-1}))(a,b^{-1}).
    $$
    Since $b^{-1}a\in\gamma_{k+1}$, we have $(b^{-1}a)^2\in\gamma_{k+2}$ by \eqref{eq:property}. Also $a,b^{-1}\in\gamma_k$ and $k\geq 2$, so $(a,(a,b^{-1}))\in\gamma_{3k}\subset\gamma_{k+2}$ and $(a,b^{-1})\in\gamma_{2k}\subset\gamma_{k+2}$. Hence $b^{-2}a^2\in\gamma_{k+2}$, so $h$ is well defined. Now we prove that $h$ is linear. Let $a,b\in \gamma_{k}$. We have
    \begin{gather*}h(\overline{a})+h(\overline{b})=a^2\gamma_{k+2}+ b^2\gamma_{k+2}=a^2b^2\gamma_{k+2}=\\aba(a,b)b\gamma_{k+2}=
    abab(a,b)((a,b),b)\gamma_{k+2}.
    \end{gather*}
    Since $k\geq 2$, we have $(a,b)\in\gamma_{2k}\subset\gamma_{k+2}$, $((a,b),b)\in\gamma_{3k}\subset\gamma_{k+2}$. Hence
    $$h(\overline{a})+h(\overline{b})=(ab)^2\gamma_{k+2}=h(\overline{ab})=h(\overline{a}+\overline{b}).
    $$
    2) By assumption, $x=\overline{a},$ $y=\overline{b}$ for some $a\in \gamma_k$, $b\in \gamma_\ell$, $\ell\geq 2$. We have
    $$[\overline{a},h(\overline{b})]=[a\gamma_{k+1}, b^2\gamma_{\ell+2}]=(a,b^2)\gamma_{k+\ell+2}=(a,b)(a,b)((a,b),b)\gamma_{k+\ell+2}.
    $$
     Since $\ell\geq 2$, we have $((a,b),b)\in\gamma_{k+2\ell}\subset\gamma_{k+\ell+2}$, so
    $$[\overline{a},h(\overline{b})]=(a,b)^2\gamma_{k+\ell+2}=h(\overline{(a,b)})=h([\overline{a},\overline{b}]).
    $$
    3) This is (2) for $x=\og_i$.\\
    4) We have $g_i^2=1$, so $(g_i,g_j)^2=(g_i,(g_i,g_j))$ in $G$. Hence
    \[h([\og_i,\og_j])=h((g_i,g_j)\gamma_3)=(g_i,g_j)^2\gamma_4=(g_i,(g_i,g_j))\gamma_4
    =[\og_i,[\og_i,\og_j]].\qedhere
    \]
\end{proof}
Recall that $L'(G)=\bigoplus_{k\geq 2}L_k(G)$.
\begin{definition}
Let $G$ be a group generated by elements $g_1,\dots,g_m$ such that $g_i^2=1$ for all $i=1,\dots,m$. We define the linear map
$$h:L'(G)\to L'(G),\quad L_k(G)\to L_{k+1}(G),\quad a\gamma_{k+1}(G)\mapsto a^2\gamma_{k+2}(G)$$
as a direct sum of linear maps constructed in Theorem \ref{t:h_properties}. By part (2) of the theorem, it  satisfies the identity $h([x,y])=[h(x),y]=[x,h(y)]$ for any $x,y\in L'(G)$.
\end{definition}
Note that the definition above applies to all right-angled Coxeter groups $\RC_\K$. In this paper, we will consider only the case $G=\RC_\K$.

\begin{remark}
\label{r:h_homotopy}
    Let $X$ be a topological space and $f:S^{k+1}\to X$ be a continuous map for some $k\geq 2$. Consider the composite map $h(f):=f\circ E^{k-1}\eta:S^{k+2}\to X$, where $E^{k-1}\eta:S^{k+2}\to S^{k+1}$ is the $(k-1)$-fold suspension over the Hopf fibration $\eta:S^3\to S^2$. We obtain an operation $h:\pi_{k+1}(X)\to\pi_{k+2}(X)$ on homotopy groups, which satisfies the analogues of properties (1)-(3) with respect to the Whitehead bracket $\pi_{k+1}(X)\times\pi_{\ell+1}(X)\to\pi_{k+\ell+1}(X)$, see \cite[Ch. X, (8.2),~(8.18),~(8.9)]{whitehead}. The property (4) is analogous to the identity $[a_1,a_2]\circ E\eta=[a_1,[a_1,a_2]]$ which holds for the standard generators $a_1,a_2\in\pi_2(\CC P^\infty\vee\CC P^\infty)\simeq\ZZ^2$ by \cite[Lemma 8.7]{kallel}.
\end{remark}
\subsection{Computations in the associated Lie algebra}
\begin{lemma}
\label{l:h_commutator_outer_letter}
    Let $x=[\og_{i_1},[\dots[\og_{i_{k}},\og_{j}]\dots]]\in L(\RC_\K)$ be a nested commutator, $k\geq 1$. Then $[\og_{i_1},x]=h(x).$
\end{lemma}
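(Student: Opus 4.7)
The plan is to reduce everything to a single group-level identity: for any group $G$ with $a^2 = 1$ and any $b \in G$, one has $(a, (a, b)) = (a, b)^2$. This is the natural extension of the computation used to prove property~(4) of Theorem~\ref{t:h_properties}, which handled the special case $b = g_j$.

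Concretely, I would lift $x$ by setting
\[
y_0 := (g_{i_2}, (g_{i_3}, \dots, (g_{i_k}, g_j)\dots)) \in \gamma_k(\RC_\K)
\]
(with the convention $y_0 := g_j$ when $k = 1$), so that $c := (g_{i_1}, y_0) \in \gamma_{k+1}(\RC_\K)$ satisfies $x = \overline{c}$. By the definition of $h$ following Theorem~\ref{t:h_properties}, $h(x) = \overline{c^2} \in L_{k+2}(\RC_\K)$, whereas $[\og_{i_1}, x] = \overline{(g_{i_1}, c)}$. The lemma therefore reduces to the congruence $(g_{i_1}, c) \equiv c^2 \pmod{\gamma_{k+3}(\RC_\K)}$.

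The main step is to verify the group identity. Writing $a := g_{i_1}$ and using $a^{-1} = a$, one expands $(a, y_0) = a y_0^{-1} a y_0$ and $(a, y_0)^{-1} = y_0^{-1} a y_0 a$, so that
\[
(a, (a, y_0)) = a \cdot (a, y_0)^{-1} \cdot a \cdot (a, y_0) = a y_0^{-1} a y_0 \cdot (a, y_0) = (a, y_0)^2,
\]
where the middle pair $a \cdot a$ cancels by $a^2 = 1$. Applied with $b = y_0$, this gives $(g_{i_1}, c) = c^2$ on the nose in $\RC_\K$, which is stronger than the required congruence.

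I do not anticipate a serious obstacle; the argument is a one-line group computation that directly generalises property~(4). A purely Lie-algebraic induction built from the four properties of $h$ alone looks more delicate, because the natural inductive hypothesis would yield $h(y) = [\og_{i_2}, y]$ with the ``wrong'' outer letter $\og_{i_2}$, and there is no evident way to convert this into the desired $h(y) = [\og_{i_1}, y]$ without reintroducing the same group-level identity.
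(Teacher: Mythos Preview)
Your argument is correct. The identity $(a,(a,b))=(a,b)^2$ whenever $a^2=1$ holds exactly as you compute, and applying it with $a=g_{i_1}$, $b=y_0$ gives $(g_{i_1},c)=c^2$ in $\RC_\K$; passing to $\gamma_{k+2}/\gamma_{k+3}$ yields the claim.

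This is a genuinely different route from the paper's. The paper argues by induction on $k$ entirely inside the Lie algebra, using only the Jacobi identity and properties (1)--(4) of $h$: writing $x=[\og_1,[\og_2,y]]$, one expands $[\og_1,x]$ by Jacobi, obtains the terms $[h([\og_1,\og_2]),y]$ and $[\og_2,h([\og_1,y])]$ (the cross-terms cancel over $\ZZ_2$), and then pulls $h$ outside using linearity and property~(2). Your approach is shorter and more elementary, and yields an exact equality at the group level rather than a congruence. The paper's approach, on the other hand, shows that the lemma is a formal consequence of the listed properties of $h$, which is relevant for the homotopy-theoretic analogue of Remark~\ref{r:h_homotopy} where no underlying group is available.

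Your closing remark that a Lie-algebraic induction ``looks more delicate'' because the inductive hypothesis would carry the wrong outer letter $\og_{i_2}$ is not quite right: the paper sidesteps this by first using Jacobi to produce the commutator $[\og_1,y]$, whose outer letter is $\og_1$, and applying the inductive hypothesis to that instead.
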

\begin{proof}
    Induction on $k.$ The base case $[\og_i,[\og_i,\og_j]]=h([\og_i,\og_j])$ is the part (4) of Theorem \ref{t:h_properties}. The inductive step: without loss of generality, $i_1=1$ and $i_2=2$. Then $x=[\og_1,[\og_2,y]]$, and $[\og_1,[\og_1,y]]=h([\og_1,y])$ by the inductive assumption. By the Jacobi identity and Theorem \ref{t:h_properties}, we have
    \begin{multline*}
    [\og_1,x]=[\og_1,[\og_1,[\og_2,y]]]=[\og_1,[[\og_1,\og_2],y]]+[\og_1,[\og_2,[\og_1,y]]]\\
    =[[\og_1,[\og_1,\og_2]],y]+\underbrace{[[\og_1,\og_2],[\og_1,y]]+[[\og_1,\og_2],[\og_1,y]]}_{=0}+[\og_2,[\og_1,[\og_1,y]]]\\
    =[h([\og_1,\og_2]),y]+[\og_2,h([\og_1,y])]\\
    =h([[\og_1,\og_2],y]+[\og_2,[\og_1,y]])=h([\og_1,[\og_2,y]])=h(x).\qedhere
    \end{multline*}
\end{proof}

\begin{corollary}
\label{c:h_commutator_inner_letter}
    Let $y=c_{\og,V}(I,[\og_i,x])$, where $x$ is a nested commutator or $x=\og_j$. Then
    $$[\og_i,y]=h(y)+\sum_{\begin{smallmatrix}
        I=A\sqcup B,\\
        A\neq\varnothing
    \end{smallmatrix}}
    \Big[c_{\og,V}(A,\og_i),c_{\og,V}(B,[\og_i,x])\Big].
    $$
\end{corollary}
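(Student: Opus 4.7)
The plan is to evaluate the auxiliary element $E:=c_{\og,V}\bigl(I,[\og_i,[\og_i,x]]\bigr)$ in two different ways and compare. On the one hand, since $[\og_i,x]$ is itself a nested commutator beginning with $\og_i$ (either of length $2$, when $x=\og_j$, or longer, when $x$ is a nested commutator), Lemma~\ref{l:h_commutator_outer_letter} applies to give $[\og_i,[\og_i,x]]=h([\og_i,x])$. Using Theorem~\ref{t:h_properties}(3) and an easy induction on $|I|$, the operator $h$ can then be pulled past every outer letter $\og_{v_{t_j}}$, yielding $E=h\bigl(c_{\og,V}(I,[\og_i,x])\bigr)=h(y)$. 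The degree hypothesis $\ell\geq 2$ required at each step is automatic because $\deg[\og_i,x]\geq 2$, so every intermediate expression also has degree at least $2$.

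On the other hand, reading the inner bracket as $[\og_i,z]$ with $z=[\og_i,x]$ and applying Lemma~\ref{l:c(I,[x,y])} directly produces
\[
E=\sum_{I=A\sqcup B}\bigl[c_{\og,V}(A,\og_i),\,c_{\og,V}(B,[\og_i,x])\bigr].
\]
The $A=\varnothing$ summand is precisely $[\og_i,c_{\og,V}(I,[\og_i,x])]=[\og_i,y]$, while the summands with $A\neq\varnothing$ assemble into the sum $S$ appearing in the statement. Equating the two evaluations of $E$ gives $h(y)=[\og_i,y]+S$, which rearranges to the asserted identity; by Proposition~\ref{p:liz2} we are over $\ZZ_2$, so signs are immaterial.

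I do not foresee a serious obstacle: the argument is essentially a two-step bookkeeping once the idea is in place, namely, the inner double occurrence of $\og_i$ is absorbed into $h$ via Lemma~\ref{l:h_commutator_outer_letter} and then commuted past the outer letters via Theorem~\ref{t:h_properties}(3). The one point requiring care is checking the degree hypothesis in Theorem~\ref{t:h_properties}(3) at every stage of the induction on $|I|$; this is precisely where the assumption that $x$ is a generator or a nested commutator (rather than an arbitrary Lie element) is used, ensuring $\deg[\og_i,x]\geq 2$.
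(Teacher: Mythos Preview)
Your proposal is correct and follows essentially the same approach as the paper: both compute $c_{\og,V}(I,[\og_i,[\og_i,x]])$ by first using Lemma~\ref{l:h_commutator_outer_letter} and Theorem~\ref{t:h_properties} to identify it with $h(y)$, and then expand via Lemma~\ref{l:c(I,[x,y])} and isolate the $A=\varnothing$ summand. Your version is slightly more explicit about the inductive use of Theorem~\ref{t:h_properties}(3) and the degree check, but the argument is the same.
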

\begin{proof}
    By Lemma \ref{l:h_commutator_outer_letter}, we have $h([\og_i,x])=[\og_i,[\og_i,x]]$. Hence
$$h(y)=h(c_{\og,V}(I,[\og_i,x]))=c_{\og,V}(I,[\og_i,[\og_i,x]])=\sum_{I=A\sqcup B}\Big[c_{\og,V}(A,\og_i),c_{\og,V}(B,[\og_i,x])\Big]$$
by Lemma \ref{l:c(I,[x,y])} and Theorem \ref{t:h_properties}. The case $A=\varnothing$ correspond to the summand $[\og_i,c_{\og,V}(I,[\og_i,x])]=[\og_i,y]$.
\end{proof}
\begin{proposition}
\label{p:removing repeats}
    Let $a\in L'(\RC_\K)$ be a nested commutator of $\og_1,\dots,\og_m$. Then it can be written as a sum $a=\sum_s h^{n_s}(a_s')$, where $n_s\geq 0$, and each element $a_s'$ is a Lie polynomial on nested commutators without repeats.
\end{proposition}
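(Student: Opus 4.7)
The plan is to induct on the length $k+1$ of the nested commutator $a = [\og_{i_1}, [\dots [\og_{i_k}, \og_j]\dots]]$. The base case $k=1$ is immediate: either $i_1 \neq j$ and $a$ is already without repeats, or $i_1 = j$ and $a = [\og_j, \og_j] = 0$. For the inductive step I will write $a = [\og_{i_1}, b]$ with $b$ a shorter nested commutator, apply the inductive hypothesis to express $b = \sum_s h^{n_s}(b_s')$, and use part (3) of Theorem~\ref{t:h_properties} to commute $\og_{i_1}$ past each $h^{n_s}$. This reduces the proposition to the following auxiliary claim: for every Lie polynomial $P$ on nested commutators without repeats and every $i \in [m]$, the element $[\og_i, P]$ can again be written as $\sum_s h^{n_s}(P_s')$ with each $P_s'$ a Lie polynomial on nested commutators without repeats.

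To prove this claim I first reduce to the case when $P = c$ is a single nested commutator without repeats. Indeed, if $P = [Q_1, Q_2]$ is a Lie bracket of such Lie polynomials, the Jacobi identity gives $[\og_i, P] = [[\og_i, Q_1], Q_2] + [Q_1, [\og_i, Q_2]]$, and since $[h^n(R), S] = h^n([R, S])$ by part (2) of Theorem~\ref{t:h_properties}, an induction on the bracket depth of $P$ closes the reduction. So write $c = [\og_{p_1}, [\dots [\og_{p_\ell}, \og_r]\dots]]$ with $p_1, \dots, p_\ell, r$ pairwise distinct, and analyse $[\og_i, c]$ according to where $i$ occurs: if $i$ is absent from the indices of $c$, then $[\og_i, c]$ is itself a nested commutator without repeats; if $i = p_1$, then Lemma~\ref{l:h_commutator_outer_letter} gives $[\og_i, c] = h(c)$; and if $i = p_t$ for some $2 \le t \le \ell$, I will write $c = c_{\og, V}(\{1, \dots, t-1\}, [\og_i, x])$ with $V = (p_1, \dots, p_{t-1})$ and $x$ the remaining inner nested commutator, and apply Corollary~\ref{c:h_commutator_inner_letter}, observing that in each cross term $[c_{\og, V}(A, \og_i), c_{\og, V}(B, [\og_i, x])]$ both factors are nested commutators without repeats because the indices of $c$ are all distinct.

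The step I expect to be the main obstacle is the remaining case $i = r$, when $\og_i$ occupies the innermost position of $c$ and neither Lemma~\ref{l:h_commutator_outer_letter} nor Corollary~\ref{c:h_commutator_inner_letter} applies directly. I plan to handle it by exploiting that $L(\RC_\K)$ is a Lie algebra over $\ZZ_2$ (Proposition~\ref{p:liz2}), so that anticommutativity yields $[\og_{p_\ell}, \og_r] = [\og_r, \og_{p_\ell}]$; this lets me rewrite $c = [\og_{p_1}, [\dots [\og_{p_{\ell-1}}, [\og_i, \og_{p_\ell}]]\dots]]$, placing $\og_i$ at position $\ell$ rather than $\ell+1$, and then invoke the preceding case. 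Putting all four subcases together finishes the auxiliary claim, and the outer induction on the length of $a$ then yields the proposition.
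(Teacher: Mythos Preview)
Your proof is correct. It uses the same two ingredients as the paper (Lemma~\ref{l:h_commutator_outer_letter} and Corollary~\ref{c:h_commutator_inner_letter}) but is organised differently. The paper runs a single induction on length: if $a$ has a repeat, it locates two occurrences of the same $\og_i$ and writes $a=c_{\og,W}(J,[\og_i,c_{\og,V}(I,[\og_i,x])])$; one application of Corollary~\ref{c:h_commutator_inner_letter}, followed by Lemma~\ref{l:c(I,[x,y])} to distribute the outer $c_{\og,W}(J,-)$ over the resulting bracket, expresses $a$ as $h(\text{shorter})$ plus a sum of brackets of strictly shorter nested commutators, and the induction closes immediately. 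Your route instead strips off the outermost letter, invokes the inductive hypothesis first, and then has to prove the auxiliary claim that $[\og_i,P]$ is again of the required form whenever $P$ is a Lie polynomial on repeat-free commutators; this forces an inner induction on bracket depth and the four-way case split on where $\og_i$ sits. Both arguments are valid; the paper's is shorter because it never needs to bracket $\og_i$ against an arbitrary Lie polynomial, while yours makes the interaction of $h$ with the Lie-polynomial structure more explicit.
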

\begin{proof}
Induction on length of $a$. The base case is $a=[\og_i,\og_j]$, which is zero if $i=j$ and has no repeats if $i\neq j$.

The inductive step: if $a$ has no repeats, then $a=h^0(a)$ and we are done. Otherwise $a$ can be written as
$$a=c_{\og,W}(J,[\og_i,c_{\og,V}(I,[\og_i,x])]),$$
where $x$ is a nested commutator or $x=\og_j$, while $W$ and $V$ are some sequences of elements from $[m]$.
By Corollary \ref{c:h_commutator_inner_letter}, Lemma \ref{l:c(I,[x,y])} and properties of $h$, we obtain:
\begin{multline*}
a=h(c_{\og,W}(J,c_{\og,V}(I,[\og_i,x])))
\\
+\sum_{J=C\sqcup D}
\sum_{\begin{smallmatrix}
I=A\sqcup B,\\
A\neq\varnothing
\end{smallmatrix}}
\Big[
c_{\og,W}(C,c_{\og,V}(A,\og_i)),
c_{\og,W}(D,c_{\og,V}(B,[\og_i,x]))
\Big].
\end{multline*}
The right hand side is expressed in terms of nested commutators
$$c_{\og,W}(J,c_{\og,V}(I,[\og_i,x])),\quad c_{\og,W}(C,c_{\og,V}(A,\og_i)),\quad c_{\og,W}(D,c_{\og,V}(B,[\og_i,x]))$$
of smaller length. Applying the inductive assumption to them and using the properties of $h$, we obtain the required expression for $a$.
\end{proof}
\section{The surjective homomorphism onto the commutator subalgebra}
For a flag simplicial complex $\K$ on the vertex set $[m]$, consider the GPTW elements as a subset of $L_\K$,
$$\GPTW:=\{c_\mu(J\setminus j,\mu_j):j\in \Theta_\K(J),J\subset[m]\}\subset L_\K.$$

\begin{definition}
    Denote by $N_\K$ the Lie subalgebra of $L_\K$ generated by the GPTW generators,
    $$N_\K:=\langle\GPTW\rangle_{Lie}\subset L_\K.$$
\end{definition}
By definition, $N_\K\subset L'_\K.$ This inclusion is strict unless $\K$ is a simplex, since $[\mu_i,[\mu_i,\mu_j]]\in L'_\K$ does not belong to $N_\K$ if $\{i,j\}\notin\K.$

\begin{definition}
    Let $\mathfrak{g}=\bigoplus_{n\in\ZZ}\mathfrak{g}_n$ be a graded Lie algebra. We introduce a graded Lie algebra structure on the polynomial ring $\mathfrak{g}[t]=\bigoplus_{k\geq 0}\mathfrak{g}t^k$ as follows:
    \begin{itemize}
        \item $\deg(xt^k):=n+k$ for $x\in\mathfrak{g}_n$;
        \item $[xt^k,yt^\ell]:=[x,y]t^{k+\ell}$ for $x,y\in\mathfrak{g}$.
    \end{itemize}
\end{definition}

Since $N_\K\subset L'_\K$, the epimorphism $\varphi:L_\K\to L(\RC_\K)$, $\mu_i\mapsto\og_i$ from Proposition \ref{p:epinmono} can be restricted to a map of Lie algebras
$$\varphi':N_\K\to L'(\RC_\K).$$ 
\begin{theorem}
\label{t:main_surjective_homomorphism}
    We have a surjective homomorphism of graded Lie algebras
    $$\psi:N_\K[t]\twoheadrightarrow L'(\RC_\K),\quad \psi(xt^k):=h^k(\varphi'(x)),~x\in N_\K.$$
\end{theorem}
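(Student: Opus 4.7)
The plan is to check that $\psi$ is (i) a well-defined graded linear map, (ii) a Lie algebra homomorphism, and (iii) surjective. For (i), since $N_\K \subset L'_\K$ we have $\varphi'(x) \in L'(\RC_\K) = \bigoplus_{n \geq 2} L_n(\RC_\K)$ for every $x \in N_\K$, so $h^k$ may be applied to $\varphi'(x)$; if $x \in (N_\K)_n$ then $h^k(\varphi'(x)) \in L_{n+k}(\RC_\K)$, matching the prescribed grading $\deg(xt^k) = n + k$. Linearity in $x$ for fixed $k$ is immediate from linearity of $\varphi'$ and $h$, and we extend to $N_\K[t] = \bigoplus_k N_\K t^k$ by linearity.

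For (ii), the crucial identity is $h([a,b]) = [h(a), b] = [a, h(b)]$ on $L'(\RC_\K)$ from Theorem \ref{t:h_properties}. Iterating it (both sides stay inside $L'(\RC_\K)$, so the identity may be reapplied) yields $h^{k+\ell}([a,b]) = [h^k(a), h^\ell(b)]$ for all $a, b \in L'(\RC_\K)$. Applying this with $a = \varphi'(x)$ and $b = \varphi'(y)$, and using that $\varphi'$ is a Lie homomorphism, we get
\[
\psi([xt^k, yt^\ell]) = h^{k+\ell}(\varphi'([x,y])) = [h^k(\varphi'(x)), h^\ell(\varphi'(y))] = [\psi(xt^k), \psi(yt^\ell)].
\]

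For (iii), the Lie algebra $L(\RC_\K)$ is generated by $\og_1, \dots, \og_m$, so for $n \geq 2$ each $L_n(\RC_\K)$ is spanned by nested commutators on these generators. Fix such a nested commutator $a \in L_n(\RC_\K)$. By Proposition \ref{p:removing repeats}, $a = \sum_s h^{n_s}(a'_s)$ with each $a'_s$ a Lie polynomial on nested commutators without repeats; a degree count forces $a'_s \in L_{n - n_s}(\RC_\K)$ with $n - n_s \geq 2$, so $a'_s \in L'(\RC_\K)$. Since $[\og_i, \og_j] = 0$ for $\{i,j\} \in \K$, Proposition \ref{p:gptw generate cwr} expresses each such nested commutator without repeats as a Lie polynomial on the GPTW generators $c_{\og}(J\setminus j, \og_j) = \varphi'(c_\mu(J\setminus j, \mu_j))$. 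Hence $a'_s = \varphi'(y_s)$ for some $y_s \in N_\K$, and $a = \sum_s \psi(y_s t^{n_s})$ lies in the image of $\psi$. The heavy lifting has been done in the preceding sections — the identity $h([x,y]) = [h(x), y]$, the removal of repeats via $h$, and the expression of repeat-free nested commutators in terms of GPTW generators — so no substantial new obstacle arises here; the only care required is the degree accounting above, which ensures that $h^{n_s}$ is applied only to elements of $L'(\RC_\K)$.
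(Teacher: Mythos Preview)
Your proof is correct and follows essentially the same approach as the paper's own proof: verify linearity via $\varphi'$ and $h$, check the Lie homomorphism property by iterating $h([x,y])=[h(x),y]=[x,h(y)]$, and establish surjectivity by reducing nested commutators first via Proposition~\ref{p:removing repeats} and then via Proposition~\ref{p:gptw generate cwr}. The only cosmetic difference is that the paper cites \cite[Proposition~3.1, Corollary~3.2]{veryovkin} for the fact that $L_n(\RC_\K)$ is spanned by nested commutators, whereas you derive it from $L(\RC_\K)$ being generated in degree one; your added degree accounting (ensuring $a'_s\in L'(\RC_\K)$ so that $h^{n_s}$ is legitimately applied) is a small clarification that the paper leaves implicit.
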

\begin{proof}
The map $\psi$ is linear since $\varphi'$ is a map of Lie algebras, while $h:L'(\RC_\K)\to L'(\RC_\K)$ is linear by Theorem \ref{t:h_properties}. Also, $\psi$ is a map of Lie algebras: it maps a bracket $[xt^k,yt^\ell]=[x,y]t^{k+\ell}$ to the element
$$h^{k+\ell}(\varphi'([x,y]))=\Big[h^k(\varphi'(x)),h^\ell(\varphi'(y))\Big]=[\psi(xt^k),\psi(yt^\ell)],$$
since $h(-)$ satisfies the identity $h([x,y])=[h(x),y)]=[x,h(y)].$

It remains to check that $\psi$ is onto. Let $n\geq 2$. By \cite[Proposition 3.1, Corollary 3.2]{veryovkin}, the abelian group $L_n(\RC_\K)$ is generated by nested commutators of length $n$ on the elements $\og_1,\dots,\og_m$. By Proposition \ref{p:removing repeats}, such a commutator $a$ is a linear combination of elements of the form $h^{n_s}(a'_s)$, where $a'_s$ is a Lie polynomial on nested commutators without repeats. Such a nested commutator is a Lie polynomial on GPTW generators by Proposition \ref{p:gptw generate cwr}, so $a'_s$ is a Lie polynomial on $\GPTW$. In other words, $a'_s=\varphi(x_s)$ for some $x_s\in N_\K$, so $h^{n_s}(a'_s)\in\Img\psi$ and $a\in\Img\psi$.
\end{proof}
\begin{conjecture}
\label{cnj:main_conjecture}
    The surjective map of Lie algebras $\psi:N_\K[t]\twoheadrightarrow L'(\RC_\K)$ is an isomorphism for any flag complex $\K$.
\end{conjecture}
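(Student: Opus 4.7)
The plan is to deduce the conjecture from Theorem~\ref{t:U(N_K)} by matching Poincar\'e series. Since $\psi$ is a degree-preserving surjection of $\ZZ_2$-vector spaces by Theorem~\ref{t:main_surjective_homomorphism}, it is an isomorphism if and only if $\dim_{\ZZ_2}(N_\K[t])_d=\dim_{\ZZ_2}L_d(\RC_\K)$ for every $d\geq 2$. Combining the grading $\deg(xt^k)=\deg(x)+k$ with the isomorphism of graded vector spaces $N_\K[t]\cong N_\K\otimes\ZZ_2[t]$ gives
$P_{N_\K[t]}(z)=(1-z)^{-1}P_{N_\K}(z),$
and the identification $U(N_\K)\cong H_*(\Omega\ZK;\ZZ_2)$ of Theorem~\ref{t:U(N_K)}, together with a Poincar\'e--Birkhoff--Witt-type factorisation that takes into account the restricted envelope structure in characteristic $2$, extracts $P_{N_\K}(z)$ from the Poincar\'e series of $H_*(\Omega\ZK;\ZZ_2)$, which is in turn determined by the $f$-vector of $\K$ via classical formulas of toric topology.

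The harder computation is the independent one of $P_{L(\RC_\K)}(z)$. The natural route is topological: identify $L(\RC_\K)$, or rather its appropriately restricted universal envelope, with the mod $2$ loop space homology of a space whose fundamental group is $\RC_\K$ and whose rational homotopy is tractable, and then apply a mod $2$ analogue of the Quillen--Milnor--Moore theorem. The Salvetti complex of $\RC_\K$, or equivalently the orbit space of the real moment-angle complex under a residual $\ZZ_2^m$-action, is a candidate model; one would need to compute its mod $2$ loop space homology in terms of the combinatorics of $\K$ and verify that the result is exactly $(1-z)^{-1}P_{N_\K}(z)$.

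A purely algebraic alternative is to upgrade Proposition~\ref{p:removing repeats} from existence to uniqueness: show that each element of $L'(\RC_\K)$ has a \emph{unique} expression of the form $\sum_s h^{n_s}(\varphi'(x_s))$, with the $x_s$ ranging over a chosen basis of $N_\K$. This naturally splits into two assertions: injectivity of $h\colon L'(\RC_\K)\to L'(\RC_\K)$, i.e.\ that $a\in\gamma_k(\RC_\K)$ with $a^2\in\gamma_{k+2}(\RC_\K)$ already lies in $\gamma_{k+1}(\RC_\K)$; and the claim that $\varphi'(N_\K)$ is a direct complement of $h(L'(\RC_\K))$ inside $L'(\RC_\K)$. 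Injectivity of $h$ looks amenable to a Magnus-type expansion adapted to the relations $g_i^2=1$, while the complement statement could be approached by carefully tracking the normal form produced by Proposition~\ref{p:removing repeats} through the proof.

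The main obstacle, in either strategy, is that the conjecture is essentially a closed-form description of $L(\RC_\K)$, and such a description has been out of reach since the work of Struik and the subsequent analysis in \cite{veryovkin}. Any successful proof must bring in a genuinely new ingredient beyond the techniques of \cite{veryovkin}: either a Poincar\'e series computation from toric topology on the loop space side, or a substantially new presentation of $L(\RC_\K)$ on the group-theoretic side, for instance via a restricted Magnus expansion encoding both the lower central series and the squaring operation $h$ simultaneously.
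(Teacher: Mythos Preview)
This statement is labeled a \emph{Conjecture} in the paper and is explicitly left open there; the paper supplies only partial evidence (Proposition~\ref{p:partial_confirmation} for graded components of degree $\leq 3$, and a reference to Prener's unpublished thesis for $\K$ a disjoint union of simplices). There is therefore no proof in the paper to compare your attempt against. Your proposal is, correspondingly, not a proof but a survey of possible strategies, and you say so yourself in the final paragraph. Your reduction to a Poincar\'e-series identity is precisely the content of Proposition~\ref{p:conjectural_poincare_series}: the conjecture is equivalent to $\dim_{\ZZ_2}L_k(\RC_\K)=n_2(\K)+\dots+n_k(\K)$ for all $k\geq 2$. One small correction: the paper extracts $P_{N_\K}$ from $P_{U(N_\K)}$ via the ordinary Poincar\'e--Birkhoff--Witt theorem over $\ZZ_2$ (see the proof of Corollary~\ref{c:N_K_poincare_series}); $N_\K$ is treated as an ordinary Lie algebra, and no restricted-envelope structure enters.

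Both routes you sketch are reasonable research programs rather than arguments. The topological one would need a mod $2$ Milnor--Moore theorem applicable to a non-simply-connected space whose fundamental group is $\RC_\K$, together with an identification of the relevant loop homology; neither step is available in the literature. The algebraic one isolates two clean subproblems (injectivity of $h$, and $\varphi'(N_\K)$ complementing $\Img h$), but these are exactly the content of the conjecture restated, and the paper's own machinery (Propositions~\ref{p:removing repeats} and~\ref{p:gptw generate cwr}) only gives existence, not uniqueness, of the normal form. In short, your diagnosis of the difficulty is accurate and aligns with the paper's, but there is no proof here, and none in the paper either.
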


The additive and multiplicative structure of the Lie algebra $N_\K$ over $\ZZ_2$ will be described in Section \ref{sec:N_K} using the connection with loop homology of polyhedral products, see Proposition \ref{p:N_K_poincare_series} and Theorem \ref{t:N_K_presentation}. 
This allows to give a restatement of Conjecture \ref{cnj:main_conjecture} in terms of the numbers $\dim_{\ZZ_2} L_k(\RC_\K)$, see Proposition \ref{p:conjectural_poincare_series}.

\section{Polyhedral products and loop homology}
\label{section:polyhedral-products}
In this sections we prove auxiliary facts about loop homology of some topological spaces, which are used in Section \ref{sec:N_K} to study the Lie algebra $N_\K$ over $\ZZ_2$.

\begin{definition}
Let $\K$ be a simplicial complex on the vertex set $[m]$ and $(X,A)$ be a pair of topological spaces. The corresponding \emph{polyhedral product} is the space
$$(X,A)^\K:=\bigcup_{I\in\K}Y_1\times\dots\times Y_m\subset X^m,\quad Y_i=\begin{cases}X,&i\in I;\\ A,&i\notin I.\end{cases}$$
In toric topology, two special cases of this construction are important: the \emph{moment-angle comples} $\ZK:=(D^2,S^1)^\K$ and the \emph{Davis--Januszkiewicz space} $\DJ(\K):=(\CC P^\infty,\ast)^\K$ (here $\ast\subset \CC P^\infty$ is the basepoint).
\end{definition}
The polyhedral product construction is functorial with respect to maps of pairs of topological spaces and with respect to inclusions of simplicial complexes \cite[Proposition 4.2.3]{bu-pa15}. For more details and examples, see~\cite[\S4.3]{bu-pa15},~\cite{bbc}.

\subsection{Whitehead brackets in polyhedral products}
\begin{proposition}[{\cite[Corollary 3.14, Lemma 3.16]{toric_homotopy}}]
\label{p:fibration}
Let $\K$ be a simplicial complex on $m$ vertices and $X$ be a space with the basepoint $\ast$. Then we have a natural homotopy fibration $$(C\Omega X,\Omega X)^\K\overset\iota\longrightarrow (X,\ast)^\K\overset s\longrightarrow X^m,$$ where $s$ is the standard inclusion (induced by the inclusion $\K\hookrightarrow \Delta^{m-1}$ of simplicial complexes), $C$ is the cone and $\Omega X$ is the based loop space of $X$. Moreover, the map $\Omega s$ has a homotopy section.\qed
\end{proposition}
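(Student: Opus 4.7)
The plan is to compute the homotopy fiber of $s$ by pulling back the path-loop fibration of $X^m$, and to construct the section of $\Omega s$ from the coordinate inclusions.

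For the fibration part, I would take the path-loop fibration $\Omega X^m \to PX^m \to X^m$ with $PX^m \cong \prod_{i=1}^m PX$ contractible; the homotopy fiber of $s$ is then modeled by the strict pullback $(X,\ast)^\K \times_{X^m} PX^m$. A coordinate-wise inspection shows that over the face piece $\prod_{i \in I} X \times \prod_{i \notin I} \{\ast\}$ of $(X,\ast)^\K$, the pullback reduces to $\prod_{i \in I} PX \times \prod_{i \notin I} \Omega X$, since $\Omega X$ is precisely the fiber of $PX \to X$ over the basepoint. Gluing these pieces over the faces $I \in \K$ identifies the pullback with the polyhedral product $(PX, \Omega X)^\K$, and since $PX$ is contractible the pair $(PX, \Omega X)$ is homotopy equivalent to $(C\Omega X, \Omega X)$ (the standard cone inclusion). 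This yields the fiber identification $(C\Omega X, \Omega X)^\K$.

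For the homotopy section of $\Omega s$, I would use that $\{j\} \in \K$ for every $j$, so the $j$-th axis inclusion $\iota_j : X \hookrightarrow X^m$ factors through $(X,\ast)^\K$. Looping these and combining via the $H$-space product in $\Omega(X,\ast)^\K$ yields
\[
\sigma: (\Omega X)^m \longrightarrow \Omega(X,\ast)^\K, \quad (\ell_1,\dots,\ell_m) \longmapsto \Omega\iota_1(\ell_1)\cdot\Omega\iota_2(\ell_2)\cdots\Omega\iota_m(\ell_m).
\]
Under the equivalence $\Omega X^m \simeq (\Omega X)^m$, the composite $\Omega s \circ \sigma$ sends $(\ell_1, \dots, \ell_m)$ to the coordinate-wise loop concatenation of $\Omega\iota_j(\ell_j)$, each of which has only its $j$-th coordinate nontrivial; the result is homotopic to the identity via the usual Eckmann--Hilton argument. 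The main obstacle is arranging sufficient cofibrancy so that the strict pullback actually models the homotopy fiber and so that $(PX, \Omega X)^\K \simeq (C\Omega X, \Omega X)^\K$ as pairs of polyhedral products; this is automatic in the CW category but would require care for general $X$.
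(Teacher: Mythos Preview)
The paper does not supply its own proof of this proposition: it is stated with a \qed immediately following and attributed to \cite[Corollary~3.14, Lemma~3.16]{toric_homotopy}. Your sketch is correct and is essentially the standard argument given in that reference: identify the homotopy fibre of $s$ with the pullback $(X,\ast)^\K\times_{X^m}PX^m\cong(PX,\Omega X)^\K$, replace the pair $(PX,\Omega X)$ by $(C\Omega X,\Omega X)$, and build the section of $\Omega s$ by multiplying the looped coordinate inclusions. Two minor comments: the final step showing $\Omega s\circ\sigma\simeq\id$ is not really Eckmann--Hilton but simply the fact that concatenation with the constant loop is null-homotopic in each factor; and your cofibrancy caveat is well placed---the equivalence $(PX,\Omega X)^\K\simeq(C\Omega X,\Omega X)^\K$ uses homotopy invariance of polyhedral products for NDR pairs, which is exactly what the cited reference assumes.
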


Since the suspension $\Sigma$ and the loop space $\Omega$ are adjoint functors, there are natural maps $E_X:X\to\Omega\Sigma X$ and $ev_X:\Sigma\Omega X\to X$ such that the diagrams below commute up to a homotopy:
\begin{equation}
\label{eq:adjoint}
\xymatrix{
\Sigma X
\ar[rr]^\id
\ar[dr]_-{\Sigma E_X}
&&
\Sigma X,
\\
&\Sigma\Omega\Sigma X
\ar[ur]_-{ev_{\Sigma X}}
}
~
\xymatrix{
\Omega X
\ar[rr]^\id
\ar[dr]_-{E_{\Omega X}}
&&
\Omega X.
\\
&\Omega\Sigma\Omega X
\ar[ur]_-{\Omega ev_{X}}
}
\end{equation}

From naturality of the polyhedral product construction we obtain the following diagram which commutes up to a homotopy (see \cite[(20)]{panov_theriault}):
\begin{equation}
\label{eq:diagram of spaces}
\xymatrix{
\ZK
\ar[d]^-{g}
\ar@/_4pc/[dd]_-{\id}
\\
(C\Omega S^2,\Omega S^2)^\K
\ar[d]^-{G}
\ar[r]^-{\iota_1}
&
(S^2,\ast)^\K
\ar[d]^-{F}\\
\ZK
\ar[r]^-{\iota_2}
&
\DJ(\K)
}
\end{equation}
Here the horizontal maps are from Proposition \ref{p:fibration}, and the maps
$$
\ZK=(C S^1,S^1)^\K\overset{g}\longrightarrow (C\Omega S^2,\Omega S^2)^\K,~
(C\Omega S^2,\Omega S^2)^\K\overset{G}\longrightarrow(CS^1,S^1)^\K
$$
are induced by the maps
$$E_{S^1}:S^1\to \Omega S^2,\quad \Omega ev_{\CC P^\infty}:\Omega S^2\to\Omega\CC P^\infty\simeq S^1.$$
We have $\Omega ev_{\CC P^\infty}\circ E_{S^1}=\id:S^1\to S^1$ by \eqref{eq:adjoint}, hence $G\circ g=\id:\ZK\to\ZK$.

For $j=1,\dots,m$ consider the maps
$$incl_{X,j}:X\hookrightarrow (X,\ast)^\K,\quad
ev_{X,j}:\Sigma\Omega X\overset{ev_X}\longrightarrow X\overset{incl_{X,j}}\longrightarrow (X,\ast)^\K$$
and their special cases
\begin{gather*}
incl_j=incl_{S^2,j}:\Sigma S^1=S^2\hookrightarrow (S^2,\ast)^\K,\quad
ev_j=ev_{S^2,j}:\Sigma\Omega S^2\to (S^2,\ast)^\K,\\
a_j=ev_{\CC P^\infty,j}:\Sigma S^1\cong \Sigma\Omega\CC P^\infty\to (\CC P^\infty,\ast)^\K=\DJ(\K).
\end{gather*}
It is well known that the map $ev_{\CC P^\infty}:S^2\simeq\Sigma\Omega \CC P^\infty\to\CC P^\infty$ is homotopic to the natural inclusion $S^2\hookrightarrow\CC P^\infty$, hence $a_j$ can be defined as the composition
$$a_j:\Sigma S^1=S^2\hookrightarrow\CC P^\infty\hookrightarrow (\CC P^\infty,\ast)^\K.$$

For $j\in J\subset[m]$, $J\setminus j=\{i_1<\dots<i_k\}$, $k\geq 1$,  consider the iterated generalised Whitehead products
\begin{gather*}
c_{ev_X}(J\setminus j,ev_{X,j}):=[ev_{X,i_1},[\dots[ev_{X,i_k},ev_{X,j}]\dots]]:\Sigma(\Omega X)^{\wedge k+1}\to (X,\ast)^\K,\\
c_{incl}(J\setminus j,incl_j):=[incl_{i_1},[\dots[incl_{i_k},incl_j]\dots]]:\Sigma(S^1)^{\wedge k+1}\to (S^2,\ast)^\K,\\
c_{ev}(J\setminus j,ev_j):=[ev_{i_1},[\dots[ev_{i_k},ev_j]\dots]]:\Sigma (\Omega S^2)^{\wedge k+1}\to (S^2,\ast)^\K,\\
c_a(J\setminus j,a_j):=[a_{i_1},[\dots[a_{i_k},a_j]\dots]]:\Sigma(S^1)^{\wedge k+1}\to \DJ(\K).
\end{gather*}
When $j$ and $J$ are clear from the context, we just write $c_{ev_X}$, $c_{incl}$, $c_{ev}$, $c_a$. Here $X^{\wedge n}$ is the smash product of $n$ copies of $X$, e.g. $(S^1)^{\wedge n}\cong S^n$. 

The following result refines the classical theorem of Porter \cite{porter}.
\begin{theorem}[{\cite[Theorem 7.2]{theriault_cocat}, see also \cite[Corollary 5.9]{iriye_kishimoto}}]
\label{t:porter}
Let $\L$ be the disjoint union of $m$ points and $X$ be a simply connected space.
Then there is a natural homotopy equivalence
$$\gamma_\L:\bigvee_{J\subset[m],J\neq\varnothing}\bigvee_{j\in J\setminus\{\max(J)\}}\Sigma(\Omega X)^{\wedge |J|}\to(C\Omega X,\Omega X)^\L$$
such that the following diagram is homotopy commutative:
$$\xymatrix{
\bigvee_{J\subset[m],J\neq\varnothing}\bigvee_{j\in J\setminus\{\max(J)\}}\Sigma(\Omega X)^{\wedge |J|}
\ar@/^/[drr]^-{\quad\vee_{J,j}c_{ev_X}(J\setminus j,ev_{X,j})}
\ar[d]^-{\gamma_\L}
&&
\\
(C\Omega X,\Omega X)^\L
\ar[rr]^-\iota
&&
(X,\ast)^\L.\qed
}$$
\end{theorem}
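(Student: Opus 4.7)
The plan is to construct $\gamma_\L$ from iterated Whitehead products and to verify that the resulting map is a homotopy equivalence by induction on $m$, using the classical Ganea--Porter fiber sequence as the base case.

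First I would construct $\gamma_\L$ as follows. Each iterated Whitehead product $c_{ev}(J\setminus j,ev_j):\Sigma(\Omega X)^{\wedge|J|}\to(X,\ast)^\L$ is a nested commutator in the graded Lie algebra $\pi_*((X,\ast)^\L)$ with respect to the Whitehead bracket; its composite with the inclusion $s:(X,\ast)^\L\to X^m$ is therefore a commutator in the abelian homotopy group $\pi_*(X^m)$ and consequently null-homotopic. By Proposition \ref{p:fibration}, $\Omega s$ admits a homotopy section, so the obstruction to lifting along $\iota$ vanishes; choose a lift $\widetilde c_{ev}(J,j):\Sigma(\Omega X)^{\wedge|J|}\to(C\Omega X,\Omega X)^\L$ for each pair $(J,j)$ with $J\subset[m]$ nonempty and $j\in J\setminus\{\max J\}$. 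The wedge of these lifts is the candidate map $\gamma_\L$, and the homotopy commutativity of the stated diagram holds by construction.

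To prove $\gamma_\L$ is a homotopy equivalence, I would proceed by induction on $m$. The base case $m=1$ is trivial (the source wedge is empty and the fiber is contractible), while $m=2$ is the classical Ganea--Porter theorem identifying the fiber of $X\vee X\hookrightarrow X\times X$ with $\Sigma\Omega X\wedge\Omega X$ attached via the universal Whitehead product $[ev_1,ev_2]$. For the inductive step $m\to m+1$, decompose $\bigvee_{i=1}^{m+1}X=(\bigvee_{i=1}^mX)\vee X$ and analyse the fiber of $(\bigvee X)\vee X\to(\prod_{i=1}^mX)\times X$ using the induced homotopy pullback square. The new fiber summands appearing at stage $m+1$ correspond to subsets $J\ni m+1$; using Jacobi and antisymmetry, one checks that they are attached via iterated Whitehead products that can be brought to the normal form $c_{ev}(J\setminus j,ev_j)$ with $j\in J\setminus\{\max J\}$, matching the indexing in the theorem.

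The main obstacle is controlling the inductive step cleanly: one must verify that the lifts $\widetilde c_{ev}(J,j)$ for distinct pairs $(J,j)$ contribute linearly independently to the fiber, and that no additional summands appear beyond those indexed by the normal form. This reduces to a Pontryagin-ring computation in $H_*(\Omega(C\Omega X,\Omega X)^\L)$, which can be carried out using the retraction afforded by $\Omega s$ and a careful bookkeeping of the Hall-type basis corresponding to the condition $j\in J\setminus\{\max J\}$. Once this bookkeeping is in place, naturality of $\gamma_\L$ in $X$ and in the vertex set is automatic from the functoriality of the polyhedral product and Whitehead product constructions.
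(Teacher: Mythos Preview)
The paper does not prove Theorem~\ref{t:porter}: it is quoted verbatim from \cite[Theorem~7.2]{theriault_cocat} and marked with a \texttt{\textbackslash qed} immediately after the statement. There is thus no argument in the paper for you to be compared against; your sketch is an attempt to supply a proof where the authors simply cite one.

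As a sketch of a Porter-type argument your outline is broadly reasonable, but a few points would need tightening before it could stand as a proof. First, the reason the iterated Whitehead product lifts through $\iota$ is not that $\Omega s$ has a section; it is that in the homotopy fibration $(C\Omega X,\Omega X)^\L\to(X,\ast)^\L\to X^m$ the composite $s\circ c_{ev}$ is null (the constituent maps $s\circ ev_i$ land in distinct product factors, so any Whitehead bracket among them vanishes), and a null-homotopic map to the base lifts to the fibre. Second, and more seriously, your inductive step is where all the content lies and is left essentially unspecified: ``analyse the fibre using the induced homotopy pullback square'' and ``reduces to a Pontryagin-ring computation'' are placeholders for what in Theriault's treatment is a careful argument with cube lemmas and half-smash decompositions; you have not identified which pullback square, nor how the new summands at stage $m+1$ split off. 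Third, naturality is \emph{not} automatic: lifts through a fibration are not unique, so arbitrary choices of $\widetilde c_{ev}(J,j)$ will generally fail to be natural in $X$. Theriault's construction builds $\gamma_\L$ in a way that guarantees naturality; your ``choose a lift'' step does not.
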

Let $j\in J\subset [m]$, $J\setminus j=\{i_1<\dots<i_k\},$ $k\geq 1$. Let $\K$ be a simplicial complex on $[m]$. Consider the composite maps
\begin{gather*}
\gamma_1:\Sigma(\Omega S^2)^{\wedge k+1}\to (C\Omega S^2,\Omega S^2)^\L\to (C\Omega S^2,\Omega S^2)^\K,\\
\gamma_2:\Sigma(S^1)^{\wedge k+1}\to (CS^1,S^1)^\L\to(CS^1,S^1)^\K,
\end{gather*}
where the first arrows are the restrictions of $\gamma_\L$ corresponding to $c_{ev_X}(J\setminus j,ev_{X,j})$ for $X=S^2$ and $X=\CC P^\infty$, respectively, and the second arrows are induced by the inclusion $\L\hookrightarrow\K$.
\begin{lemma}
\label{l:some strange compositions}
The map $\gamma_2:\Sigma (S^1)^{\wedge k+1}\to\ZK$ satisfies
\begin{gather*}
\iota_2\circ\gamma_2=c_a(J\setminus j,a_j):\Sigma(S^1)^{\wedge k+1}\to\DJ(\K),\\
\iota_1\circ g\circ\gamma_2=c_{incl}(J\setminus j,incl_j):\Sigma(S^1)^{\wedge k+1}\to (S^2,\ast)^\K.
\end{gather*}
\end{lemma}
\begin{proof}
By naturality, it is sufficient to consider the case $\K=\L$.
Since $a_j=ev_{\CC P^\infty,j}$, the identity $\iota_2\circ\gamma_2=c_a$ follows from Theorem \ref{t:porter}. To prove the second identity, we will show that the diagram below is homotopy commutative.
$$
\xymatrix{
\Sigma(S^1)^{\wedge k+1}
\ar[d]^-{\Sigma E_{S^1}^{\wedge k+1}}
\ar[r]^{\gamma_2}
\ar@/^1.5pc/[rr]^-{c_{incl}}
&
(CS^1,S^1)^\L
\ar[d]^-g
&
(S^2,\ast)^\L
\ar@{=}[d]
\\
\Sigma(\Omega S^2)^{\wedge k+1}
\ar[r]^-{\gamma_1}
\ar@/_1.5pc/[rr]_-{c_{ev}}
&
(C\Omega S^2,\Omega S^2)^\L
\ar[r]^-{\iota_1}&
(S^2,\ast)^\L\\
}
$$
Consider the map $ev_j\circ \Sigma E_{S^1}:\Sigma S^1\to\Sigma\Omega S^2\to (S^2,\ast)^\L$ for some $j\in[m]$.
It is a composition
$$\xymatrix{
\Sigma S^1
\ar[r]^-{\Sigma E_{S^1}}
&
\Sigma\Omega S^2
\ar[r]^-{ev_{S^2}}
&
S^2
\ar[r]^-{incl_j}
&
(S^2,\ast)^\L,
}$$
but $ev_{S^2}\circ\Sigma E_{S^1}$ is the identity map by the left diagram in \eqref{eq:adjoint}. Hence $ev_j\circ \Sigma E_{S^1}=incl_j$. Taking Whitehead products, we obtain $c_{ev}\circ \Sigma E_{S^1}^{\wedge k+1}=c_{incl}$. We also have $\iota_1\circ\gamma_1=c_{ev}$ by Theorem \ref{t:porter} and $g\circ\gamma_2=\gamma_1\circ \Sigma E_{S^1}^{\wedge k+1}$ by naturality of $\gamma_\L$. So
\[\iota_1\circ g\circ\gamma_2=\iota_1\circ\gamma_1\circ \Sigma E_{S^1}^{\wedge k+1}=c_{ev}\circ \Sigma E_{S^1}^{\wedge k+1}=c_{incl}.\qedhere\]
\end{proof}

\subsection{Homomorphisms of loop homology algebras}
Until the end of this section, fix a field $\k$.
\begin{proposition}
\label{p:hopf_extension}
Let $\K$ be a simplicial complex on $m$ vertices and $X$ be a space with the basepoint $\ast$. Then there is a natural homotopy equivalence $\Omega(X,\ast)^\K\simeq \Omega X^m\times \Omega(C\Omega X,\Omega X)^\K$. For any field $\k$, the induced maps in homology
$$H_*(\Omega(C\Omega X,\Omega X)^\K;\k)\overset{\iota_*}\longrightarrow H_*(\Omega(X,\ast)^\K;\k)\to H_*(\Omega X^m;\k)$$
 form an extension of Hopf algebras. In particular, there is a natural isomorphism of $\k$-modules $$H_*(\Omega(X,\ast)^\K;\k)\simeq H_*(\Omega(C\Omega X,\Omega X)^\K;\k)\otimes_\k H_*(\Omega X^m;\k),$$ and the homomorphism $\iota_*:H_*(\Omega(C\Omega X,\Omega X)^\K;\k)\to H_*(\Omega(X,\ast)^\K;\k)$ is injective.
\end{proposition}
\begin{proof}
Apply
\cite[Lemma B.2, Theorem B.3]{vylegzhanin} to the fibration from Proposition \ref{p:fibration}.
\end{proof}

Applying the loop homology functor to \eqref{eq:diagram of spaces}, we obtain the following diagram of Hopf algebras and their homomorphisms (where $\hookrightarrow$ denote injective maps):
\begin{equation}
\label{eq:diagram_of_algebras}
\xymatrix{
H_*(\Omega\ZK;\k)
\ar[d]^-{g_*}
\ar@/_6pc/[dd]_-{\id}
\\
H_*(\Omega(C\Omega S^2,\Omega S^2)^\K;\k)
\ar[d]^-{G_*}
\ar@{^(->}[r]^-{\iota_{1*}}
&
H_*(\Omega(S^2,\ast)^\K;\k)
\ar[d]^-{F_*}\\
H_*(\Omega\ZK;\k)\ar@{^(->}[r]^-{\iota_{2*}}
&
H_*(\Omega\DJ(\K);\k).
}
\end{equation}
Thus $g_*$ is injective. Also, the maps $\iota_{1*},$ $\iota_{2*}$ are injective by Proposition \ref{p:hopf_extension}.

For homogeneous elements $x,y$ of a graded associative algebra $C$, their graded commutator is defined as $[x,y]:=xy-(-1)^{\deg(x)\cdot\deg(y)}yx\in C$.  We consider the Hurewicz map
\begin{equation}
\label{eq:hurewicz}
\eta:\pi_{*+1}(X)\to H_*(\Omega X;\k),
\end{equation}
choosing the sign in such a way that $\eta$ takes Whitehead products to graded commutators \cite{samelson}.

From now on, $\K$ is a flag complex on $[m]$. Consider the graded Hopf algebras
\begin{align*}
A&:=T(x_1,\dots,x_m)/(x_ix_j+x_jx_i=0,\{i,j\}\in\K),\\
B&:= T(u_1,\dots,u_m)/(u_iu_j+u_ju_i=0,\{i,j\}\in\K;~u_i^2=0,~i=1,\dots,m).
\end{align*}
\begin{proposition}
\label{p:loop_homology}
Let $\K$ be a flag complex on $[m]$ and $\k$ be a field. Then we have natural isomorphisms of graded Hopf algebras
$$A\cong H_*(\Omega(S^2,\ast)^\K;\k),\quad B\cong H_*(\Omega\DJ(\K);\k).$$
Under this isomorphisms, the maps from the diagram \eqref{eq:diagram_of_algebras} satisfy $F_*(x_i)=u_i$ and $$x_i=\eta(incl_i)\in H_1(\Omega(S^2,\ast)^\K;\k),\quad u_i=\eta(a_i)\in H_1(\Omega\DJ(\K);\k).$$ These algebras admit a $\Zm$-grading $\deg u_i=\deg x_i=e_i\in\Zm$.
\end{proposition}
\begin{proof}
See \cite[Theorem 1.3]{bubenik_gold} and \cite[Theorem 9.3]{panov_ray}.
\end{proof}
For an extension of Hopf algebras $R'\to R\overset{\pi}\longrightarrow R''$, $R'$ is determined by $\pi$ as $R'=\{x\in R:~((\id_{R''}\otimes\Delta)(\pi(x))=x\otimes 1\}$ (see e.g. \cite[Definition 2.3]{vylegzhanin22}). It follows that the Hopf subalgebras $$H_*(\Omega(C\Omega S^2,\Omega S^2)^\K;\k)\hookrightarrow H_*(\Omega(S^2,\ast)^\K;\k)\cong A,\quad H_*(\Omega\ZK;\k)\hookrightarrow H_*(\Omega\DJ(\K);\k)\cong B$$ and the morphism $G_*:H_*(\Omega(C\Omega S^2,\Omega S^2)^\K;\k)\to H_*(\Omega\ZK;\k)$ admit an algebraic description:
\begin{corollary}
\label{c:map_of_hopf_extensions}
There is a map of Hopf algebra extensions
$$\xymatrix{
H_*(\Omega(C\Omega S^2,\Omega S^2)^\K)
\ar[r]^-{\iota_{1*}}
\ar[d]^-{G_*}
&
A
\ar[r]^-{\pi_1}
\ar[d]^-{F_*}
&
T(x_1,\dots,x_m)/(x_ix_j+x_jx_i=0,~i\neq j)
\ar[d]^-{x_i\mapsto u_i}
\\
H_*(\Omega\ZK)
\ar[r]^-{\iota_{2*}}
&
B
\ar[r]^-{\pi_2}
&
T(u_1,\dots,u_m)/(u_iu_j+u_ju_i=0;~u_i^2=0).\qed
}$$
\end{corollary}

For a subset $J\subset[m]$ and an element $j\in J$, $J\setminus j=\{i_1<\dots<i_k\}$, $k\geq 1$, consider the iterated graded commutators
\begin{gather*}
c'_x=c_x'(J\setminus j,x_j):=[x_{i_1},[\dots[x_{i_k},x_j]\dots]]\in A,\\
c'_u=c_u'(J\setminus j,u_j):=[u_{i_1},[\dots[u_{i_k},u_j]\dots]]\in B.
\end{gather*}

\begin{lemma}
\label{l:unique c_x and c_u}
In the notation above,
\begin{enumerate}
\item There is unique element $\widehat{c}_x(J\setminus j,x_j)\in H_*(\Omega(C\Omega S^2,\Omega S^2)^\K;\k)$ such that\\$\iota_{1*}(\widehat{c}_x)=c'_x\in A$;
\item There is unique element $\widehat{c}_u(J\setminus j,u_j)\in H_*(\Omega\ZK;\k)$ such that\\$\iota_{2*}(\widehat{c}_u)=c'_u\in B$.
\end{enumerate}
\end{lemma}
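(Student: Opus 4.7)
Uniqueness in both parts is immediate from Corollary \ref{c:hopf_extension}: the maps $\iota_{1*}$ and $\iota_{2*}$ are injective, so any preimage of $c'_x$ (resp.\ $c'_u$) is determined. The whole content of the lemma is therefore existence, i.e.\ constructing explicit elements $\widehat{c}_x$ and $\widehat{c}_u$ that map to $c'_x$ and $c'_u$.

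The plan is to realise $c'_x$ and $c'_u$ as Hurewicz images of maps that, thanks to Lemma \ref{l:some strange compositions}, factor through the ``upper'' spaces in the diagram \eqref{eq:diagram_of_algebras}. Concretely, let $\L$ be the discrete simplicial complex on $[m]$ and let $\gamma_2 \colon \Sigma(S^1)^{\wedge k+1} \to \ZK$ be the composite of the wedge summand of $\gamma_\L$ corresponding to the pair $(J,j)$ with the natural map $(CS^1,S^1)^\L \to (CS^1,S^1)^\K = \ZK$, as in Lemma \ref{l:some strange compositions}. I then set
\[
\widehat{c}_u := \eta(\gamma_2) \in H_*(\Omega\ZK;\k), \qquad \widehat{c}_x := \eta(g\circ\gamma_2) \in H_*(\Omega(C\Omega S^2,\Omega S^2)^\K;\k),
\]
where $\eta$ is the Hurewicz homomorphism.

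To check these work, I use that with the normalisation fixed before Proposition \ref{p:loop_homology}, the Hurewicz map converts iterated Whitehead products into iterated graded commutators. By Lemma \ref{l:some strange compositions}, $\iota_2\circ\gamma_2 \simeq c_a(J\setminus j,a_j)$ and $\iota_1\circ g\circ\gamma_2 \simeq c_{incl}(J\setminus j,incl_j)$. Applying $\eta$ and using naturality of the Hurewicz map with respect to $\iota_1,\iota_2$, together with $\eta(a_i)=u_i$ and $\eta(incl_i)=x_i$,
\begin{align*}
\iota_{2*}(\widehat{c}_u) &= \eta(\iota_2\circ\gamma_2) = \eta\bigl(c_a(J\setminus j,a_j)\bigr) = [u_{i_1},[\ldots[u_{i_k},u_j]\ldots]] = c'_u,\\
\iota_{1*}(\widehat{c}_x) &= \eta(\iota_1\circ g\circ\gamma_2) = \eta\bigl(c_{incl}(J\setminus j,incl_j)\bigr) = [x_{i_1},[\ldots[x_{i_k},x_j]\ldots]] = c'_x.
\end{align*}

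The only thing that requires any care is the sign convention: one must verify that the Hurewicz map, with the sign convention fixed right before Proposition \ref{p:loop_homology}, really sends the iterated Whitehead product to the iterated graded commutator with the same signs as in the definitions of $c'_x$ and $c'_u$. This is standard (it is exactly the reason the sign is chosen that way) and, over a field $\k$ with the grading $|x_i|=|u_i|=1$, reduces to compatibility of the Samelson/Whitehead product identification under adjunction $\Sigma\dashv\Omega$. With uniqueness already in hand from injectivity, this completes the proof.
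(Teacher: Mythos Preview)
Your argument is correct, but it differs from the paper's route. The paper's proof of Lemma~\ref{l:unique c_x and c_u} is very short: uniqueness follows from injectivity of $\iota_{1*},\iota_{2*}$ (as you also note), and existence is simply cited from \cite[Corollary 3.10]{vylegzhanin} for part (2), with part (1) declared ``similar''. You instead construct the preimages explicitly via the Hurewicz map and Lemma~\ref{l:some strange compositions}. This is exactly the identification $\widehat{c}_u=\eta(\gamma_2)$, $\widehat{c}_x=g_*(\eta(\gamma_2))$ that the paper derives \emph{a posteriori} in the proof of Proposition~\ref{p:value_of_j_on_commutators}; in effect, you have merged Lemma~\ref{l:unique c_x and c_u} with the first half of that proof. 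Your version is more self-contained (no appeal to \cite{vylegzhanin}), while the paper's citation-based version keeps the lemma logically independent of the Whitehead-product machinery and of Lemma~\ref{l:some strange compositions}.
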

\begin{proof}
Consider the diagram from Corollary \ref{c:map_of_hopf_extensions}.
Since $\iota_{1*}$ and $\iota_{2*}$ are injective by Proposition \ref{p:hopf_extension}, it is sufficient to prove the existence. We follow the argument from \cite[Corollary 3.10]{vylegzhanin}:
the elements $x_i\in A$ and $u_i\in B$ are primitive by degree reasons, so $c'_x\in A$ and $c'_u\in B$ since the primitive elements of a Hopf algebra form a Lie algebra. Also, $c'_x\in\Ker \pi_1$ and $c'_u\in\Ker\pi_2$. The primitive elements of Hopf algebra extensions form an exact sequence of Lie algebras (see \cite[Corollary B.4]{vylegzhanin}), so $c'_x\in\Img\iota_{1*}$ and $c'_u\in\Img\iota_{2*}$.
\end{proof}

The next theorem is a purely algebraic result on homomorphisms of partially commutative Hopf algebras. Remarkably, it is obtained by topological arguments.
\begin{theorem}
\label{t:section_description}
Let $\K$ be a simplicial complex on the vertex set $[m]$ and $\k$ be a field.
Then the restriction $G_*:H_*(\Omega\ZK;\k)\to H_*(\Omega(C\Omega S^2,\Omega S^2)^\K;\k)$ of the homomorphism $F_*:A\to B,$ $x_i\mapsto u_i$, admits a $\Zm$-graded multiplicative section $$g_*:H_*(\Omega(C\Omega S^2,\Omega S^2)^\K;\k)\to H_*(\Omega\ZK;\k)$$ such that
$$g_*(\widehat{c}_u(J\setminus j,u_j))=\widehat{c}_x(J\setminus j,x_j)$$ for any $J\subset[m]$, $|J|\geq 2$, $j\in J$. In particular, $\iota_{1*}(g_*(\widehat{c}_u(J\setminus j,u_j)))=c_x'(J\setminus j,x_j)$.
\end{theorem}
\begin{proof}
The homomorphism $g_*$ of Hopf algebras is induced by the map $g:\ZK\to (C\Omega S^2,\Omega S^2)^\K$ from the diagram \eqref{eq:diagram of spaces}. It is a section of $G_*$ by the commutativity of the induced diagram \eqref{eq:diagram_of_algebras}. The fact that $g_*$ is $\Zm$-graded will be proved later.

We now prove that $g_*(\widehat{c}_u)=\widehat{c}_x.$ Recall that $\iota_2\circ\gamma_2=c_a$ and $\iota_1\circ g\circ\gamma_2=c_{incl}$ by Lemma \ref{l:some strange compositions}.
 Applying the Hurewicz map \eqref{eq:hurewicz}, we obtain $\iota_{2*}(\eta(\gamma_2))=c'_u$ and $\iota_{1*}(g_*(\eta(\gamma_2))=c'_x$ by Proposition \ref{p:loop_homology}. On the other hand, $c'_u=\iota_{2*}(\widehat{c}_u)$ and $c'_x=\iota_{1*}(\widehat{c}_x)$ by Lemma \ref{l:unique c_x and c_u}. Since $\iota_{1*},$ $\iota_{2*}$ are injective, we obtain
$$\eta(\gamma_2)=\widehat{c}_u,\quad g_*(\eta(\gamma_2))=\widehat{c}_x.$$
It follows that $g_*(\widehat{c}_u)=\widehat{c}_x$ and $\iota_{1*}(g_*(\widehat{c}_u))=\iota_{1*}(\widehat{c}_x)=c'_x$.

Finally, $g_*$ maps $\Zm$-homogeneous elements $\widehat{c}_u\in H_*(\Omega\ZK;\k)$ to $\Zm$-homogeneous elements $\widehat{c}_x$ of the same degree. The algebra $H_*(\Omega\ZK;\k)$ is generated by the elements $\widehat{c}_u$ by \cite[Theorem 4.3]{gptw}, so $g_*$ preserves the $\Zm$-grading.
\end{proof}
\begin{remark}
The elements $\widehat{c}_u$, $\widehat{c}_x$ have the same multidegree $\sum_{j\in J}e_j\in\Zm$, and $G_*(\widehat{c}_x)=\widehat{c}_u$. It can be shown that the algebras $H_*(\Omega(C\Omega S^2,\Omega S^2)^\K;\k)$ and $H_*(\Omega\ZK;\k)$ are of same dimension in this multidegree, so $G_*\circ g_*=\id$ implies $g_*(\widehat{c}_u)=\widehat{c}_x$. However, we do not know in advance that $g_*$ preserves the $\Zm$-grading, so this argument does not prove Theorem \ref{t:section_description}.
\end{remark}
\section{Structure of the Lie algebra \texorpdfstring{$N_\K$}{NK}}
\label{sec:N_K}

\subsection{Universal enveloping algebra and loop homology of moment-angle complexes}
\begin{lemma}
\label{l:universal_enveloping_generated}
    Let $L$ be a Lie algebra over a field $\k$ and $UL$ be its universal enveloping algebra. For a subset $X\subset L$, consider the Lie subalgebra $N:=\langle X\rangle_{Lie}\subset L$ and the associative subalgebra $A:=\langle X\rangle_{Ass}\subset UL$ generated by $X$. Then the natural map $UN\to A$ is an isomorphism.
\end{lemma}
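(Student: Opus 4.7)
The plan is to construct the natural map, then split the proof into surjectivity (which is essentially tautological) and injectivity (which will rely on the Poincar\'e--Birkhoff--Witt theorem).

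First I would construct the map. The inclusion $N\hookrightarrow L\hookrightarrow UL$ is a Lie algebra homomorphism when $UL$ is equipped with the commutator bracket, so by the universal property of $UN$ it extends uniquely to an associative algebra homomorphism $\phi:UN\to UL$. I claim $\phi(UN)\subset A$. Since $\phi$ is multiplicative and $UN$ is generated as an associative algebra by $N$, it suffices to check that $\phi(N)\subset A$. But $N=\langle X\rangle_{Lie}$, so every element of $N$ is a Lie polynomial in elements of $X$; under $\phi$ each Lie bracket $[a,b]$ in $L$ becomes the commutator $ab-ba$ in $UL$, so the image of any such Lie polynomial lies in the associative subalgebra generated by $X$, namely $A$. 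Hence $\phi$ factors as a homomorphism $UN\to A$, which is the natural map in question.

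Surjectivity follows immediately: $A$ is generated as an associative algebra by $X\subset N\subset\phi(UN)$, and $\phi(UN)$ is already closed under multiplication, so $A\subset\phi(UN)$.

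The main obstacle is injectivity, and the plan is to reduce it to the standard fact that for any Lie subalgebra $N\subset L$ over a field the induced map $UN\to UL$ is injective. This is a consequence of PBW: pick an ordered $\k$-basis of $N$ and extend it to an ordered $\k$-basis of $L$; PBW then identifies $UN$ and $UL$ with the $\k$-spans of ordered monomials in these bases, and the monomials in the basis of $N$ form a subset of those in the basis of $L$, so $UN\to UL$ is injective. Since our map $UN\to A$ is the corestriction of $\phi:UN\to UL$ to its image, it is also injective, which completes the proof.
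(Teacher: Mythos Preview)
Your proof is correct and follows essentially the same approach as the paper: both arguments use the Poincar\'e--Birkhoff--Witt theorem to get injectivity of $UN\to UL$, and both observe that the image is exactly $\langle X\rangle_{Ass}$ because $N=\langle X\rangle_{Lie}$ and Lie brackets become commutators. The paper phrases it slightly more tersely---first identifying $UN$ with a subalgebra of $UL$ via PBW, then noting $UN=\langle N\rangle_{Ass}=\langle X\rangle_{Ass}=A$---but the content is the same.
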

\begin{proof}
    The inclusion of the Lie subalgebra $N\subset L$ induces an injective map of universal enveloping algebras (e.g. by the Poincar\'e--Birkhoff--Witt theorem); thus $UN\subset UL$ is a subalgebra. Since $UN=\langle N\rangle_{Ass}$ and $N=\langle X\rangle_{Lie}$, the associative algebra $UN$ is generated by $X$. Hence $UN=\langle X\rangle_{Ass}=A$.
\end{proof}

Recall that the subset $\GPTW=\{c_\mu(J\setminus j,\mu_j):J\subset[m],j\in\Theta_\K(J)\}$ of
$$L_\K=FL_{\ZZ_2}(\mu_1,\dots,\mu_m)/([\mu_i,\mu_j]=0,~\{i,j\}\in\K)$$
generates the Lie subalgebra $N_\K\subset L_\K$. 
\begin{theorem}
    \label{t:U(N_K)}
    For any flag complex $\K$ we have an isomorphism $$U(N_\K)\cong H_*(\Omega\ZK;\ZZ_2)$$
     of $\Zm$-graded associative $\ZZ_2$-algebras.
\end{theorem}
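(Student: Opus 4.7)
The plan is to show that $U(N_\K)$ and $H_*(\Omega\ZK;\ZZ_2)$, viewed as subalgebras of $A=H_*(\Omega(S^2,\ast)^\K;\ZZ_2)$, actually coincide, and to read off the isomorphism from this common description.

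For the Lie-algebra side, I would use Proposition~\ref{p:loop_homology} to identify $A$ with the universal enveloping algebra $U(L_\K)$ via $\mu_i\mapsto x_i$: over $\ZZ_2$ the relations $x_ix_j+x_jx_i=0$ are precisely the enveloping-algebra relations coming from $[\mu_i,\mu_j]=0$, and since Lie brackets and graded commutators agree in characteristic two, each nested Lie commutator $c_\mu(J\setminus j,\mu_j)\in L_\K$ is sent to $c'_x(J\setminus j,x_j)\in A$. Lemma~\ref{l:universal_enveloping_generated} applied to $N_\K=\langle GPTW\rangle_{Lie}\subset L_\K$ then identifies $U(N_\K)$ with the associative subalgebra of $A$ generated by the elements $c'_x(J\setminus j,x_j)$ for $j\in\Theta_\K(J)$, $J\subset[m]$.

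For the topological side, diagram~\eqref{eq:diagram_of_algebras} together with Corollary~\ref{c:hopf_extension} yields an injective algebra map $\iota_{1*}\circ g_*:H_*(\Omega\ZK;\ZZ_2)\hookrightarrow A$, and Proposition~\ref{p:value_of_j_on_commutators} shows that this embedding sends $\widehat{c}_u(J\setminus j,u_j)$ to $c'_x(J\setminus j,x_j)$ for every $j\in J\subset[m]$. Invoking \cite[Theorem~4.3]{gptw}, which asserts that for a flag complex $H_*(\Omega\ZK;\ZZ_2)$ is generated as an associative algebra by the nested commutators $\widehat{c}_u(J\setminus j,u_j)$ with $j\in\Theta_\K(J)$, then identifies the image of $\iota_{1*}\circ g_*$ with precisely the associative subalgebra of $A$ generated by the GPTW-indexed $c'_x(J\setminus j,x_j)$. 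By the previous paragraph this is $U(N_\K)$, and the resulting isomorphism respects the homological grading because every identification along the way does.

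The main obstacle is the generation input from \cite{gptw}. Injectivity of $\iota_{1*}\circ g_*$ comes cheaply from the Hopf-algebra framework, whereas the opposite containment genuinely needs an external statement that the $\widehat{c}_u$ with $j\in\Theta_\K(J)$ multiplicatively span $H_*(\Omega\ZK;\ZZ_2)$. If one wished to avoid \cite{gptw}, the alternative would be to analyse the Hopf-algebra extension $H_*(\Omega\ZK;\ZZ_2)\hookrightarrow B\twoheadrightarrow\Lambda[u_1,\dots,u_m]$ and construct a multiplicative section by hand, or to match Poincar\'e series via Proposition~\ref{p:N_K_poincare_series} together with a direct degree-by-degree calculation of $\dim H_*(\Omega\ZK;\ZZ_2)$. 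One should also verify that the generation result of \cite{gptw} applies verbatim over $\ZZ_2$.
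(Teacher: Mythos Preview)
Your argument is correct and follows essentially the same route as the paper: identify $A\cong U(L_\K)$ over $\ZZ_2$, embed $H_*(\Omega\ZK;\ZZ_2)$ into $A$ via the injective map $\iota_{1*}\circ g_*$, use Proposition~\ref{p:value_of_j_on_commutators} to see that the GPTW classes $\widehat{c}_u$ land on the GPTW commutators $c'_x$, invoke \cite[Theorem~4.3]{gptw} for multiplicative generation, and then apply Lemma~\ref{l:universal_enveloping_generated}. Your cautionary remark about the Poincar\'e-series alternative via Proposition~\ref{p:N_K_poincare_series} should be dropped, since in the paper that proposition is deduced \emph{from} Theorem~\ref{t:U(N_K)} and would make the argument circular.
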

\begin{proof}
    For any field $\k$, we have an injective map of algebras
    $$\iota_{1*}\circ g_*:H_*(\Omega\ZK;\k)\to H_*(\Omega(S^2,\ast)^\K;\k)\cong A$$
    from the diagram \eqref{eq:diagram_of_algebras}.
    For $\k=\ZZ_2$ we also have an isomorphism
    \begin{align*}
    A=~&T_{\ZZ_2}(x_1,\dots,x_m)/(x_ix_j+x_jx_i=0,~\{i,j\}\in\K)\\
    \cong~&T_{\ZZ_2}(\mu_1,\dots,\mu_m)/(\mu_i\mu_j-\mu_j\mu_i=0,~\{i,j\}\in\K)\cong U(L_\K),
    \end{align*}
    which identifies the nested graded commutators $c_x'(I,x_j)\in A$ with the nested commutators $c_\mu(I,\mu_j)\in L_\K$. (Here we again use that $\k=\ZZ_2$.)

On the other hand, the algebra $H_*(\Omega\ZK;\k)$ is multiplicatively generated by the set $\{\widehat{c}_u(J\setminus j,u_j):J\subset[m],~j\in\Theta_\K(J)\},$ see \cite[Theorem 4.3]{gptw}. By Theorem \ref{t:section_description}, the map $\iota_{1*}\circ g_*$ takes this set to the set $\GPTW\subset A$. Hence $\Img(\iota_{1*}\circ g_*)=$ $\langle\GPTW\rangle_{Ass}\subset U(L_\K)$. But $N_\K:=\langle\GPTW\rangle_{Lie}\subset L_\K$, so we obtain the required isomorphism by Lemma \ref{l:universal_enveloping_generated}.
\end{proof}
We consider multiindices $\alpha=(\alpha_1,\dots,\alpha_m)=\sum_{i=1}^m\alpha_ie_i\in\Zm$. The \emph{Poincar\'e series} of a $\Zm$-graded vector space $V=\bigoplus_{\alpha\in\Zm}V_\alpha$ is the formal power series
$$P(V;\lambda):=\sum_{\alpha\in\Zm}\dim(V_\alpha)\cdot\lambda^\alpha\in\ZZ[[\lambda_1,\dots,\lambda_m]],\quad \lambda^\alpha:=\prod_{i=1}^m\lambda_i^{\alpha_i}.$$
We identify a subset $J\subset[m]$ with the multiindex $\sum_{j\in J}e_j\in\Zm$, so $\lambda^J=\prod_{j\in J}\lambda_j$.

\begin{proposition}
\label{p:N_K_poincare_series}
    For $\alpha\in\Zm$, denote by $n_\alpha(\K):=\dim_{\ZZ_2}(N_\K)_\alpha$ the dimensions of $\Zm$-graded components of the Lie algebra $N_\K$. Then the numbers $n_\alpha(\K)$ are determined by the identity $$\prod_{\alpha\in\Zm}(1-\lambda^\alpha)^{n_\alpha(\K)}=\sum_{J\subset[m]}(1-\chi(\K_J))\lambda^{J}\in\ZZ[[\lambda_1,\dots,\lambda_m]].$$
    In particular, the dimensions of ordinary graded components, $n_i(\K):=\dim_{\ZZ_2}(N_\K)_i$, are determined by the identity
    $$\prod_{i\geq 1}(1-t^i)^{n_i(\K)}=\sum_{J\subset[m]}(1-\chi(\K_J))t^{|J|}\in\ZZ[[t]].$$
\end{proposition}
\begin{proof}
    Denote by $Sym(V)$ the symmetric algebra of a graded vector space $V$ (i.e. the polynomial algebra on the additive basis of $V$). The left hand side of the identity is equal to $1/P(Sym(N_\K);\lambda)$, while the right hand side is equal to $1/P(H_*(\Omega\ZK;\ZZ_2);\lambda)$ by \cite[Theorem 4.8]{vylegzhanin22} (in this paper, a different multigrading $\deg u_i=(-1,2e_i)\in\ZZ\times\Zm$ was considered). The algebras $H_*(\Omega\ZK;\ZZ_2)\cong U(N_\K)$ and $Sym(N_\K)$ are additively isomorphic by the Poincar\'e--Birkhoff--Witt theorem, so they have the same Poincar\'e series. The second identity is obtained by the substitution $\lambda_1=\dots=\lambda_m=t$.
\end{proof}
\begin{remark}
The same numbers $n_k(\K)$, denoted by $D_{k+1}$, appear in the description of homotopy type of moment-angle complexes in \cite[Theorem 1.2]{vylegzhanin}.
\end{remark}

\sloppy
A minimal presentation of the algebra $H_*(\Omega\ZK;\k)$ is known for flag $\K$ and any field $\k$ \cite[Theorem 1.1]{vylegzhanin}: we take GPTW generators as generators, while the relations are described in terms of simplicial 1-cycles generating first homology of full subcomplexes in $\K$. This presentation is minimal: any presentation of $H_*(\Omega\ZK;\k)$ has at least the same number of generators, and similarly with relations. We use this result to give a presentation of $N_\K$ by generators and relations, and give a criterion of freeness.
\begin{theorem}
\label{t:N_K_presentation}
Let $\K$ be a flag complex on $[m]$, and $\k=\ZZ_2$. Then the Lie algebra $N_\K$ is generated by the GPTW generators modulo the $\sum_{J\subset[m]}\dim_{\ZZ_2} H_1(\K_J;\ZZ_2)$ relations described in \cite[Theorem 1.1]{vylegzhanin}. This presentation is minimal.
\end{theorem}
\begin{proof}
By construction, the relations are Lie polynomials on the GPTW generators. Consider the Lie algebra $N$ generated by GPTW generators modulo these relations. By \cite[Corollaire 1.3.8]{lemaire}, the universal enveloping algebra $UN$ is presented by the same generators and relations, hence $UN\cong H_*(\Omega\ZK;\ZZ_2)\cong UN_\K$ by \cite[Theorem 1.1]{vylegzhanin} and Theorem \ref{t:U(N_K)}.

The same relations hold in the algebra $U(L_\K)\supset H_*(\Omega\ZK;\ZZ_2)$, so they hold in the Lie subalgebra $N_\K\subset U(L_\K)$ generated by the GPTW generators. We obtain a surjective map of Lie algebras $N\to N_\K$, hence a surjective map of associative algebras $UN\to UN_\K\cong H_*(\Omega\ZK;\ZZ_2)$. Since $UN\cong UN_\K$, this map is injective. So $N\cong N_\K$.
\end{proof}

\begin{theorem}
\label{t:N_K_is_free}
The following three conditions are equivalent:
\begin{enumerate}
    \item $N_\K$ is a free Lie algebra over $\ZZ_2$;
    \item $N_\K$ is freely generated by the GPTW generators, $N_\K\cong \FL_{\ZZ_2}(\GPTW)$;
    \item $\K^1$ is a chordal graph.
\end{enumerate}
\end{theorem}
\begin{proof}
    (2)$\Rightarrow$(1) is clear.
    
    If the graph $\K^1$ is not chordal, there is a full subcomplex $\K_J$ isomorphic to a $k$-cycle, $k\geq 4$. Then $H_1(\K_J;\ZZ_2)\cong\ZZ_2$, so in any presentation of the Lie algebra $N_\K$ there is at least one relation. So $N_\K$ is not free. This proves the implication (1)$\Rightarrow$(3).

Now let $\K^1$ be a chordal graph. The algebra $H_*(\Omega\ZK;\k)$ is a free algebra on the GPTW generators \cite[Theorem 4.6]{gptw}, so $U(N_\K)\cong T_{\ZZ_2}(\GPTW)$. Hence $N_\K\cong \FL_{\ZZ_2}(\GPTW)$. (An alternative argument: if $\K^1$ is chordal, then any full subcomplex $\K_J$ is homotopy equivalent to a disjoint union of points, hence $\sum_{J\subset[m]}\dim_{\ZZ_2} H_1(\K_J;\ZZ_2)=0$). This proves the implication (3)$\Rightarrow$(2).
\end{proof}
Note the similar result of Panov and Veryovkin \cite[Corollary 4.4]{pv}: the commutator subgroup $\RC'_\K=\gamma_2(\RC_\K)$ of the right-angled Coxeter group is free if and only if $\K^1$ is chordal graph, and in this case the analogues of GPTW generators  freely generate $\RC'_\K$.

\begin{conjecture}
    For any commutative ring $\k$ with unit, the Lie subalgebra $$N_{\K,\k}:=\langle\GPTW\rangle_{Lie}\subset L_{\K,\k}:=\FL_\k(\mu_1,\dots,\mu_m)/([\mu_i,\mu_j]=0,~\forall\{i,j\}\in\K)$$ is a free $\k$-module, and $(N_{\K,\k})_\alpha\simeq\k^{\oplus n_\alpha(\K)}$ for all $\alpha\in\Zm$. Moreover, analogues of Theorems \ref{t:N_K_presentation}, \ref{t:N_K_is_free} hold.
\end{conjecture}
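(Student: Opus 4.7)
The plan is to bootstrap from the settled $\k = \ZZ_2$ case to a general commutative ring by way of $\k = \ZZ$ and base change. The graph Lie algebra $L_\K^{(\ZZ)}$ is already known to be a free $\ZZ$-module with explicit graded ranks (via Duchamp--Krob~\cite{Duch-Krob} identifying it with $L(\RA_\K)$), so $N_\K^{(\ZZ)}$ is automatically torsion-free as a submodule of a free $\ZZ$-module.

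I would next establish the rank formula $\mathrm{rank}_\ZZ(N_\K^{(\ZZ)})_\alpha = n_\alpha(\K)$. The lower bound is the easier direction: the natural surjection $N_\K^{(\ZZ)} \otimes_\ZZ \ZZ_2 \twoheadrightarrow N_\K^{(\ZZ_2)}$ together with Corollary~\ref{c:N_K_poincare_series} forces $\mathrm{rank}_\ZZ (N_\K^{(\ZZ)})_\alpha \geq n_\alpha(\K)$. For the matching upper bound I would lift the defining relations of Theorem~\ref{t:N_K_free_chordal}(1) from $\ZZ_2$ to $\ZZ$, obtaining a Lie algebra $\widetilde N$ presented over $\ZZ$ by the GPTW generators and these lifted relations; one checks that $\widetilde N$ surjects onto $N_\K^{(\ZZ)}$, and one bounds the graded $\ZZ$-ranks of $\widetilde N$ by $n_\alpha(\K)$ via a Gr\"obner--Shirshov or combinatorial PBW argument using the known Poincar\'e series of Corollary~\ref{c:N_K_poincare_series}. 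Together with the lower bound this forces equality and simultaneously freeness of $N_\K^{(\ZZ)}$.

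Once the $\ZZ$-case is settled, the extension to an arbitrary commutative ring $\k$ is formal: the tautological surjection $N_\K^{(\ZZ)} \otimes_\ZZ \k \twoheadrightarrow N_\K^{(\k)}$ is injective because $N_\K^{(\ZZ)}$ is a free $\ZZ$-module sitting inside the free $\ZZ$-module $L_\K^{(\ZZ)}$, and $L_\K^{(\k)} = L_\K^{(\ZZ)}\otimes_\ZZ\k$. The analogue of Theorem~\ref{t:N_K_free_chordal}(2)--(3) is then inherited: the minimal presentation of $N_\K^{(\k)}$ obtained by base change from the minimal presentation of $N_\K^{(\ZZ)}$ gives the chordality criterion, and the direct freeness argument of~\cite[Theorem 4.6]{gptw} for chordal $\K^1$ runs uniformly over any $\k$ once freeness of the loop homology algebra is established.

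The main obstacle is the upper bound in the rank computation over $\ZZ$. For $\k = \ZZ_2$ this was obtained from the topological identification $U(N_\K) \cong H_*(\Omega\ZK; \ZZ_2)$ of Theorem~\ref{t:U(N_K)}, which crucially relies on $H_*(\Omega(S^2,\ast)^\K; \ZZ_2) \cong U(L_\K)$ — valid only in characteristic $2$, since the loop homology carries \emph{anti}commuting rather than commuting relations among the $x_i$. Over $\ZZ$ no analogous direct topological identification is available, so the upper bound must come from a purely algebraic source, most plausibly a Gr\"obner--Shirshov basis for the presentation of $\widetilde N$ by the lifted GPTW relations, verified to match the Poincar\'e series of~\cite[Theorem 1.1]{vylegzhanin}. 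This combinatorial step is where I expect the real work to lie.
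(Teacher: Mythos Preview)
The statement you are attempting to prove is stated in the paper as a \emph{conjecture}, not a theorem: the paper offers no proof. Immediately after the statement, the authors note that the argument for Theorem~\ref{t:N_K_free_chordal} breaks for $\mathrm{char}\,\k\neq 2$ because it relies on the coincidence of ordinary commutators $ab-ba$ with graded commutators $ab-(-1)^{|a|\cdot|b|}ba$. There is therefore no ``paper's own proof'' to compare your proposal against.

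As to the proposal itself: it is a strategy, not a proof, and you acknowledge as much. The entire content of the conjecture is concentrated in the step you label ``the main obstacle'' and defer to an unexecuted Gr\"obner--Shirshov computation. The relations of \cite[Theorem~1.1]{vylegzhanin} are identities among \emph{graded} commutators in $H_*(\Omega\ZK;\k)$; there is no evident reason they should lift to relations in the ungraded Lie algebra $N_\K^{(\ZZ)}\subset L_\K^{(\ZZ)}$, and verifying this is exactly what is open.

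Your base-change step also has a gap. Freeness of $N_\K^{(\ZZ)}$ does not by itself make $N_\K^{(\ZZ)}\otimes_\ZZ\k\to L_\K^{(\ZZ)}\otimes_\ZZ\k$ injective: that requires the quotient $L_\K^{(\ZZ)}/N_\K^{(\ZZ)}$ to be torsion-free (equivalently, $N_\K^{(\ZZ)}$ saturated in $L_\K^{(\ZZ)}$), which you have not addressed. Controlling this torsion would in turn require knowing $\dim_{\mathbb{F}_p}(N_\K^{(\mathbb{F}_p)})_\alpha=n_\alpha(\K)$ for every prime $p$, which is again the conjecture over each $\mathbb{F}_p$ --- so the bootstrap from $\ZZ_2$ alone does not close.
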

Note that the proof of Theorem \ref{t:N_K_presentation} does not apply if $\mathrm{char}\,\k\neq 2$: we used that the commutators $ab-ba$ coincide with the graded commutators $ab-(-1)^{|a|\cdot|b|}ba$.

\subsection{Applications to the conjecture}
Recall that the numbers $n_i(\K):=\dim_{\ZZ_2} (N_\K)_i$ are computed in Proposition \ref{p:N_K_poincare_series}.
\label{subsec:relation with conj}
\begin{proposition}
\label{p:conjectural_poincare_series}
For a flag simplicial complex $\K$, the following statements are equivalent:
\begin{enumerate}
    \item Conjecture \ref{cnj:main_conjecture} holds for $\K$;
    \item $\dim_{\ZZ_2} L_k(\RC_\K)= n_2(\K)+\dots+n_k(\K)$ for all $k\geq 2$;
    \item $\dim_{\ZZ_2} L_k(\RC_\K)\geq n_2(\K)+\dots+n_k(\K)$ for all $k\geq 2$.
\end{enumerate}
\end{proposition}
\begin{proof}
By Theorem \ref{t:main_surjective_homomorphism},
\begin{equation}
\label{eq:dim-comparison}
\dim L_k(\RC_\K)\leq \dim(N_\K[t])_k=n_2(\K)+\dots+n_k(\K)
\end{equation}
for all $k\geq 2$,
so (2) and (3) are equivalent.

Clearly, $L'(\RC_\K)\cong N_\K[t]$ if and only if \eqref{eq:dim-comparison} turns into equality for all $k\geq 2$. It follows that (1) and (2) are equivalent.
\end{proof}
We show that Conjecture \ref{cnj:main_conjecture} holds for graded components of degree $2$ and $3$.
\begin{proposition}
\label{p:partial_confirmation}
For $i=2,3$ we have $\dim_{\ZZ_2} L_i(\RC_\K)=n_2(\K)+\dots+n_i(\K)$.
\end{proposition}
\begin{proof}
    By Proposition \ref{p:N_K_poincare_series},
    $$\prod_{i\geq 2}(1-x^i)^{n_i(\K)}=\sum_{J\subset[m]}(1-\chi(\K_J))x^{|J|}.$$

    For a simplicial complex $\L$ on $k$ vertices, let $c_\K(\L)$ be a number of $k$-element subsets $J\subset[m]$ such that the complexes $\K_J$ and $\L$ are isomorphic (possibly, after a permutation of vertices). By going over all flag simplicial complexes on $\leq 3$ vertices up to an isomorphism, we obtain: the right hand side of the identity is equal to
    $$1-c_\K(\pp)x^2-
    (2c_\K(\ppp)+c_\K(\pepp))x^3+O(x^4).$$
    The left hand side is equal to
    $$(1-x^2)^{n_2}(1-x^3)^{n_3}(1+O(x^4))=1-n_2x^2-n_3x^3+O(x^4);$$
    thus $n_2(\K)=c_\K(\pp)$ and $n_3(\K)=2c_\K(\ppp)+c_\K(\pepp)$.
On the other hand, \cite[Theorem 4.5]{veryovkin} describes the groups $L_i(\RC_\K)$ for $i=2,3$: the basis of these groups is given by the GPTW generators of degree $2,3$ and by the elements $[\og_i,[\og_i,\og_j]]$, where $\{i,j\}\notin\K$ and $i<j$. The GPTW generators of degree $2$ are in one-to-one correspondence with full subcomplexes of $\K$ isomorphic to $\pp$\,. In degree $3$, there are precisely two GPTW generators for each full subcomplex isomorphic to \,$\ppp$\,, and one GPTW generator for each full subcomplex isomorphic to \,$\pepp$\,. Thus, in our notation, 
$\dim_{\ZZ_2}L_2(\RC_\K)=c_\K(\pp)$ and $\dim_{\ZZ_2}L_3(\RC_\K)=c_\K(\pp)+2c_\K(\ppp)+c_\K(\pepp)$.
\end{proof}

Finally, we consider the special case when $\K^1$ is a chordal graph.
\begin{proposition}
    \label{p:conjecture_chordal_case}
    Let $\K^1$ be a chordal graph. If Conjecture \ref{cnj:main_conjecture} holds, then we have an isomorphism of graded Lie algebras
    $$\psi:\FL_{\ZZ_2}(\GPTW)[t]\overset\simeq\longrightarrow L'(\RC_\K),\quad \psi(xt^n):=h^n(\varphi'(x)).$$
\end{proposition}
\begin{proof}
By Theorem \ref{t:N_K_is_free}, in this case $N_\K\cong \FL_{\ZZ_2}(\GPTW)$.
\end{proof}
If $\K=\Delta^{m_1-1}\sqcup\dots\sqcup\Delta^{m_s-1}$ is a disjoint union of simplices (in other words, if $\RC_\K\cong \ZZ_2^{m_1}\ast\dots\ast\ZZ_2^{m_s}$), a similar result is stated in the thesis of R. Prener \cite[Theorem 4.39]{prener}. For example, ``G-simple basic commutators'' (GSBC) of \cite[Definition 3.03]{prener} are similar to our GPTW generators.  Prener's arguments use only the fact that $\RC_\K'$ is a free group, hence can be generalised to the chordal case. However, we were not able to reconstruct the details of the proof of the key lemma \cite[Lemma 4.37]{prener}.
\subsection{Examples and computations}
\label{subsec:examples}

By Theorem \ref{t:main_surjective_homomorphism}, we have an exact sequence of Lie algebras $$N_\K[t]\overset\psi\longrightarrow L(\RC_\K)\to CL_{\ZZ_2}(\og_1,\dots,\og_m)\to 0,$$
where $CL_\k(\dots)$ is the abelian Lie algebra over a field $\k$. If Conjecture \ref{cnj:main_conjecture} holds for $\K$, we get an extension $0\to N_\K[t]\to L(\RC_\K)\to CL_{\ZZ_2}(\og_1,\dots,\og_m)\to 0$.
Then the commutators in $L(\RC_\K)$ can be computed as follows:
\begin{enumerate}
    \item The commutator of $xt^k,yt^\ell\in N_\K[t]$ is equal to $[x,y]t^{k+\ell}\in N_\K[t]$, where $[x,y]$ is the commutator in $N_\K$;
    \item The commutator $[\og_i,\og_j]$ equals to a GPTW generator $[\mu_i,\mu_j]\in N_\K[t]$ if $\{i,j\}\notin\K$ and is zero if $\{i,j\}\in\K$;
    \item The commutator $[\og_i,y]\in N_\K[t]$ for $y\in\GPTW$ can be computed by applying Proposition \ref{p:removing repeats} to $a=[\og_i,y]$ and then applying Proposition \ref{p:gptw generate cwr} to the commutators $a'_i$.
    \item The commutator $[\og_i,y]\in N_\K[t]$ for arbitrary $y\in N_\K$ is computed inductively using (3), the Jacobi identity and properties of the operation $h$. (Here we use that $y$ is a Lie polynomial on GPTW generators.)
    \item The commutator $[\og_i,yt^\ell]\in N_\K[t]$ for $y\in N_\K$ equals $[\og_i,y]t^\ell$, where $[\og_i,y]$ is computed above.
\end{enumerate}
Even though a multiplication table for $L(\RC_\K)$ can be computed this way, it would be desirable to have a presentation of this Lie algebra by multiplicative generators and relations. Note that $\og_1,\dots,\og_m$ is a minimal set of generators.
\begin{problem}
Assuming Conjecture \ref{cnj:main_conjecture}, find a defining set of relations between the standard multiplicative generators $\og_1,\dots,\og_m$ of the Lie algebra $L(\RC_\K)$.
\end{problem}

\begin{example}
    Let $\K$ be a disjoint union of two points. Then the set $\GPTW=\{[\og_1,\og_2]\}$ is a singleton, so the free Lie algebra $\FL_{\ZZ_2}(\GPTW)\cong\ZZ_2\cdot [\og_1,\og_2]$ is abelian; it follows that the polynomial ring is also an abelian Lie algebra:
    $$\FL_{\ZZ_2}(GPTW)[t]\cong CL_{\ZZ_2}(\{xt^k:k\geq 0\})=\bigoplus_{k\geq 0}\ZZ_2 \cdot xt^k,~[xt^k,xt^\ell]=0.$$  
    By Theorem \ref{t:main_surjective_homomorphism} we have a surjective map of Lie algebras
    $$\psi:CL_{\ZZ_2}(\{xt^k:k\geq 0\})\twoheadrightarrow L'(\RC_\K),\quad xt^k\mapsto h^k([a_1,a_2])=[\underbrace{a_1,[a_1,\dots[a_1}_{k+1\text{ times}},a_2]\dots]].$$
    By \cite[Example 4.3]{veryovkin}, this map is bijective. Thus Conjecture \ref{cnj:main_conjecture} holds for $\K=\bullet\bullet$. To describe the Lie bracket in $L(\RC_\K)\simeq\ZZ_2\cdot\{\og_1,\og_2\}\oplus \bigoplus_{n\geq 0}\ZZ_2\cdot h^n([\og_1,\og_2])$, it is sufficient to compute the elements $[\og_1,[\og_1,\og_2]]$ and $[\og_2,[\og_1,\og_2]]$. Both elements are equal to $h([\og_1,\og_2])$ by part (4) of Theorem \ref{t:main_surjective_homomorphism}. See \cite[Proposition 4.4]{veryovkin} for a presentation of the Lie algebra $L(\RC_\K)$ by generators and relations.
\end{example}
\begin{example}
    Let $\K$ be a complex on three vertices with the single edge $\{1,3\}$. We have $[\og_1,\og_3]=0$ and $\GPTW=\{a:=[\og_2,\og_1],~b:=[\og_3,\og_2],c:=[\og_1,[\og_3,\og_2]]\}.$ We compute the commutators of these elements with the basic elements $\og_1,\og_2,\og_3$:
    \begin{gather*}[\og_1,a]=[\og_2,a]=h(a),~[\og_1,b]=c,~[\og_2,b]=[\og_3,b]=h(b),~[\og_1,c]=h(c);\\
    [\og_3,a]:=[\og_3,[\og_2,\og_1]]=[[\og_3,\og_2],\og_1]+[\og_2,[\og_3,\og_1]]=c+[\og_2,0]=c;\\
    [\og_2,c]:=[\og_2,[\og_1,[\og_3,\og_2]]]=[[\og_2,\og_1],[\og_3,\og_2]]+[\og_1,[\og_2,[\og_2,\og_3]]]\\
    =[a,b]+h([\og_1,[\og_2,\og_3]])=[a,b]+h(c);\\
    [\og_3,c]=[[\og_3,\og_1],[\og_3,\og_2]]+[\og_1,[\og_3,[\og_3,\og_2]]]=[0,[\og_3,\og_2]]+h(c)=h(c).
    \end{gather*}

We obtain an extension of Lie algebras
$$0\to \Big(\FL_{\ZZ_2}(a,b,c)[t]\Big)/\Ker\psi\to L(\RC_\K)\to CL_{\ZZ_2}(\og_1,\og_2,\og_3)\to 0,$$ where the commutators are computed as follows:
$[xt^k,yt^\ell]=[x,y]t^{k+\ell},$ $[\og_1,\og_2]=a$, $[\og_1,\og_3]=0,$ $[\og_2,\og_3]=b$,
    \begin{align*}
    [\og_1,a]&=at,
    &
    [\og_2,a]&=at,
    &
    [\og_3,a]&=c;\\
    [\og_1,b]&=c,
    &
    [\og_2,b]&=bt,
    &
    [\og_3,b]&=bt;\\
    [\og_1,c]&=ct,
    &
    [\og_2,c]&=ct+[a,b],
    &
    [\og_3,c]&=ct.    
    \end{align*}
If Conjecture \ref{cnj:main_conjecture} holds for $\K$, then $\Ker\psi=0$, so  the multiplicative structure of $L(\RC_\K)$ can be reconstructed from the identities above.
\end{example}
\begin{example}
    Let $\K$ be a boundary of pentagon. By Theorem \ref{t:U(N_K)} and the description of the algebra $H_*(\Omega\ZK;\k)$ \cite[Theorem 3.2]{veryovkin_pontryagin}, in this case the $\ZZ_2$-Lie algebra $N_\K$ is presented by the generators
    \begin{gather*}\alpha_1=[\og_3,\og_1],~\alpha_2=[\og_4,\og_1],~\alpha_3=[\og_4,\og_2],~\alpha_4=[\og_5,\og_2],~\alpha_5=[\og_5,\og_3],\\
    \beta_1=[\og_4,[\og_5,\og_2]],~\beta_2=[\og_3,[\og_5,\og_2]],~\beta_3=[\og_1,[\og_5,\og_3]],\\\beta_4=[\og_3,[\og_4,\og_1]],~\beta_5=[\og_2,[\og_4,\og_1]]\end{gather*}
    modulo the single relation $[\alpha_1,\beta_1]+\dots+[\alpha_5,\beta_5]=0.$
    By Proposition \ref{p:N_K_poincare_series}, the numbers $\dim (N_\K)_i=n_i(\K)$ satisfy $\prod_{i\geq 1}(1-t^i)^{n_i(\K)}=1-5t^2-5t^3+t^5$.
    An additive basis of $N_\K$ can be obtained e.g. using the Gr\"obner--Shirshov theory.
\end{example}

\end{document}